\numberwithin{equation}{section}
\newtheorem{thm}{Theorem}[section]
\newtheorem{lem}[thm]{Lemma}
\newtheorem{cor}[thm]{Corollary}
\newtheorem{prop}[thm]{Proposition}
\newtheorem{rem}[thm]{Remark}
\newcommand\cA{{\mathcal A}}
\newcommand\cC{{\mathcal C}}
\newcommand\cD{{\mathcal D}}
\newcommand\cL{{\mathcal L}}
\newcommand\bE{{\mathbb E}}
\newcommand\bN{{\mathbb N}}
\newcommand\bP{{\mathbb P}}
\newcommand\bR{{\mathbb R}}
\newcommand\bX{{\mathbb X}}
\newcommand{\nn}{\mathbb{N}}
\newcommand{\zz}{\mathbb{Z}}
\newcommand{\eq}{\begin{equation}}
\newcommand{\en}{\end{equation}}
\newcommand{\la}{\lambda}
\newcommand{\abbr}[1]{{\sc\lowercase{#1}}}
\begin{document}

\title[Convergence of the infinite Atlas process]
{The infinite Atlas process:\\ Convergence to equilibrium}

\author[A. Dembo]{Amir Dembo$^\star$}
\address{Amir Dembo\\
Stanford University, Stanford 94305, USA}
\email{{\tt adembo@stanford.edu}}

\author[M. Jara]{Milton Jara$^\dagger$}
\address{Milton Jara\\
IMPA, Estrada Dona Castorina, 110\\
Jardim Bot\^anico, CEP 22460-320
Rio de Janeiro, Brazil}
\email{{\tt mjara@impa.br}}

\author[S. Olla]{Stefano Olla$^\ddagger$}
\address{Stefano Olla\\
CEREMADE, UMR CNRS\\
Universit\'e Paris-Dauphine, PSL Research University\\
75016 Paris, France}
\email{{\tt olla@ceremade.dauphine.fr}}

\begin{abstract}
The semi-infinite Atlas process is a one-dimensional system 
of Brownian particles, where only the leftmost particle 
gets a unit drift to the right. Its particle spacing 
process has infinitely many stationary measures, with 
one distinguished translation invariant reversible measure.
We show that the latter is attractive for a large class 
of initial configurations of slowly growing (or bounded) 
particle densities. Key to our proof is a new estimate 
on the rate of convergence to equilibrium for the particle 
spacing in a triangular array of finite, large size systems.
\end{abstract}
\thanks{$\!\!\!\!\!\!\!\!\!\!$
$^\star$Research partially supported by NSF grant \#DMS-1613091.\\
$\dagger$Research partially supported by the ERC Horizon 2020  
grant \#715734, by  ANR-14-CE25-0011 grant EDNHS and by ANR-15-CE40-0020-01 grant LSD.\\
$\ddagger$Research partially supported by ANR-15-CE40-0020-01 grant LSD.\\}
\subjclass[2010]{60K35, 82C22, 60F17, 60J60}
\keywords{Interacting particles, reflecting Brownian motions,  
non-equilibrium hydrodynamics, infinite Atlas process}
\date{\today 
%{\bf File: {\jobname}.tex.}
} 
\maketitle

\section{Introduction}\label{sec:intro}

Systems of competing Brownian particles interacting through their rank
dependent drift and diffusion coefficient vectors have received much 
recent attention (for example, 
in stochastic portfolio theory, where they 
appear under the name first-order market model, see \cite{fk} and the 
references therein).
For a fixed number of particles $m\in\nn$, such system 
corresponds to the unique weak solution of
\eq\label{mainSDE}
{d}Y_i(t)=\sum_{j \ge 1} \gamma_j\,{\bf 1}_{\{Y_i(t)=Y_{(j)}(t)\}}\,{d}t + \,
\sum_{j \ge 1} \sigma_j\,{\bf 1}_{\{Y_i(t)=Y_{(j)}(t)\}}
{d}W_i(t)\,,
\en
for $i=1,\ldots,m$, where $\underline{\gamma}=(\gamma_1,\ldots,\gamma_m)$ 
and $\underline{\sigma}=(\sigma_1,\ldots,\sigma_m)$ are
some constant drift and diffusion coefficient vectors and $(W_i(t), t \ge 0)$,
$i \ge 1$ are independent standard Brownian motions.
%\textcolor{blue}{
%Denoting by $X_1(t)\leq X_2(t)\leq\ldots\leq X_m(t)$
%the ranked ordered particles at time $t$, the}
Here $Y_{(1)}(t)\leq Y_{(2)}(t)\leq\ldots\leq Y_{(m)}(t)$
are the ranked particles at time $t$ and the  
%associated
${\bR}_+^{m-1}$-valued \emph{spacings process}
$\underline{Z}(t)=(Z_1 (t), Z_2 (t), \dots,Z_{m-1}(t))$,
$t \geq 0$, is given by
\begin{equation}\label{gap}
Z_k(t) :=
X_{k+1}(t) - X_k(t) 
:= Y_{(k+1)} (t)- Y_{(k)}(t)
\,, \qquad k \ge 1 \,.
\end{equation}
The variables $X_k(\cdot)$ and $Z_k(\cdot)$ correspond to
the $k$-th ranked particle and $k$-th spacing, respectively,
with $X_1=\min_i Y_i$ denoting the leftmost particle. In 
particular, existence and uniqueness of such weak solution to 
\eqref{mainSDE} has been shown already in \cite{bp}. The
corresponding ranked process $\underline{X}(t)$ solves the system 
\begin{equation}
  \label{eq:rbm-gen}
  dX_j(t) = \gamma_j dt + \sigma_j d B_j (t) + dL_{j-1}(t) -
    dL_{j}(t) \,, \qquad j=1, \ldots, m
\end{equation}
for independent standard Brownian motions $(B_j(t), t \ge 0)$,
where $L_{j}(t)$ denotes the local time at zero of the 
non-negative semi-martingale $Z_j(\cdot)$, during $[0,t]$,
% $j$-th spacing (namely $\{s \ge 0: Z_j(s)=0\}$), 
with $L_{0} \equiv 0$ and $L_{m} \equiv 0$. 
The spacing process $\underline{Z}(t)$ is 
thus a 
%special case of 
reflected Brownian motions (\abbr{rbm}) in a polyhedral domain. That is, the solution in ${\bR}_+^{m-1}$ of
\begin{equation}
  \label{eq:4-gen}
 dZ_j = (\gamma_{j+1} - \gamma_j) dt + 
 \sigma_{j+1} d B_{j+1}  - \sigma_j d B_{j} +
 2dL_{j} - dL_{j+1} - dL_{j-1} \,.  
 % \qquad j=1, \ldots, m-1 
\end{equation}
The general theory of
such \abbr{rbm} (due to \cite{hw,wil}, c.f. 
the survey \cite{Wil1995}), characterizes 
those $(\underline{\gamma},\underline{\sigma})$ 
for which the stationary distribution of $\underline{Z}(t)$
is a product of exponential random variables. Further utilizing 
this theory, \cite{sar1} deduces verious stochastic comparison 
results, whereas \cite{ips} and the references therein, 
estimate the rate $t_m$, $m \gg 1$ of convergence 
in distribution of the spacing process $\underline{Z}(t)$. 
In particular, the \textit{Atlas model} of $m$ particles, 
denoted by \abbr{Atlas}$_m(\gamma)$ (or \abbr{Atlas}$_m$ when 
$\gamma=1$), corresponds to 
$\sigma_j \equiv 1$ and $\gamma_j = \gamma {\bf 1}_{j=1}$. For
\abbr{Atlas}$_m(\gamma)$ it is shown in \cite[Corollary 10]{pp} 
that the spacing process $\underline{Z}(t)$ 
has the unique invariant measure
\begin{equation}\label{eq:rho-n}
\mu_\star^{(m,2\gamma)} := \bigotimes
\limits_{k=1}^{m-1}{\bf Exp} \big(2 \gamma (1-k/m) )\,, \quad m \in \nn \,,
\end{equation}
out of which \cite[Theorem 1]{pp} deduces that 
\begin{equation}
\label{rhola}
\mu_\star^{(\infty,2\gamma)} := \bigotimes\limits_{k=1}^{\infty}
{\bf Exp}(2\gamma),
\end{equation}
is an invariant measure for the spacings process of 
\abbr{Atlas}$_\infty(\gamma)$ (see also \cite{rv} for 
invariant measures of spacings when the particles
follow linear Brownian motions which are repelled by 
their nearest neighbors through a potential). By 
time-space scaling we hereafter set $\gamma=1$ \abbr{wlog}
and recall in passing that to rigorously construct the
\abbr{Atlas}$_\infty$ 
%(and similar infinite systems of rank-dependent diffusions), 
% as in \cite{iks, sar2, sh2})
we call $\underline{y} = (y_i)_{i \ge 1} \in \bR^\infty$  \emph{rankable} 
if there exists a bijective mapping to the ranked terms
$y_{(1)} \le y_{(2)} \le y_{(3)} \le \ldots$
of $\underline{y}$. The 
solution of \eqref{mainSDE} starting at 
a fixed $\underline{y} \in \bR^{\infty}$ 
% (i.e. having a.s.~ $Y_i(0) = y_i$ for all $i \in \nn$), 
is then well defined if a.s.~the resulting process $\underline{Y}=(Y_1(t),Y_2(t),\ldots)$ is rankable at all $t$ 
(under some measurable ranking permutation). The \abbr{Atlas}$_\infty$ 
process is unique in law and well defined, when 
% $\underline{Y}(0)=\underline{y}$ is such that 
\begin{equation}
\label{maincond1}
{\mathbb P} \big( \sum_{i \ge 1} e^{-\alpha Y_i(0)^2} < \infty \big) = 1\,, 
\qquad \mbox{for any}\ \ \alpha > 0 
\end{equation}
(see \cite[Prop. 3.1]{sh2}), and \cite[Theorem 2]{iks} further 
constructs a strong solution of \eqref{mainSDE} in this setting
% for the \abbr{Atlas}$_\infty$ process, no need for the stronger
% assumption \cite[(1.6)]{iks}, just modify \cite[proof of Prop. 4]{iks}
% accordingly to use instead \eqref{maincond1}. 
(more generally, whenever 
$\sigma_0^2=0$, 
%the sequence 
$j \mapsto \sigma_{j+1}^2$ is 
concave and eventually, both $\sigma_j^2 =1$ and $\gamma_j=0$).
We focus here on  
\abbr{Atlas}$_\infty$, where \abbr{wlog} all \abbr{Atlas}$_m$,
$m \in \nn \cup \{\infty\}$, evolutions considered, 
start at a ranked configuration $\underline{Y}(0)=\underline{X}(0)$
having leftmost particle at zero (i.e. $X_1(0)=0$), and which
satisfies \eqref{maincond1}. For example, 
% the latter  
this 
applies when $\underline{Z}(0)$ is drawn from the product measure 
\begin{equation}\label{eq:rho-a-lambda}
\mu_\star^{(\infty,\lambda,a)} := \bigotimes_{k=1}^\infty {\bf Exp}
(\lambda + k a) \,, \qquad \lambda > 0, \, a \ge 0 \,.
\end{equation}
The natural conjecture made in \cite{pp} that $\mu_\star^{(\infty,2)}$
is the only invariant measure for spacing of \abbr{atlas}$_\infty$,
has been refuted by \cite{sar4} showing that 
$\{ \mu_\star^{(\infty,2,a)}$, $a \ge 0\}$, forms an \textit{infinite family of 
such invariant measures} 
% for the spacing of \abbr{Atlas}$_\infty$ 
(similar invariant spacings measures appeared earlier, in the 
non-interacting discrete model studied by \cite{ra,sh3}). 

As for the behavior of the leading particle of \abbr{Atlas}$_\infty$,
\cite{dembo2015equilibrium} verifies \cite[Conj. 3]{pp}, that 
starting with spacing at the translation invariant 
equilibrium law $\mu^{(\infty,2)}_\star$ results with 
\begin{equation}\label{eq:pp-conj}
t^{-1/4} X_{1} (t) \stackrel{d}{\rightarrow} {\mathcal N}(0,c) \,, \quad \textrm{ when }
\;\; t \to \infty \,, \;\; \textrm{ some } \; \; c \in (0,\infty) \,.
\end{equation}
Similar asymptotic fluctuations at equilibrium were established in
\cite{dgl,har} for a tagged particle in the \emph{doubly-infinite} 
Harris system (the non-interacting model with 
$\gamma_j \equiv 0$, $\sigma_j \equiv 1$), for the symmetric 
exclusion process associated with the \abbr{srw} on $\zz$
(starting with \cite{ar}),
% also {df,lv,rv}, 
% following \cite{dembo2015equilibrium}, also 
and for a 
discrete version of the Atlas model (see \cite{hernandez2015equilibrium}).
In contrast, initial spacing of law $\mu^{(\infty,2,a)}_\star$ for $a>0$, 
induces a negative ballistic motion of the leading particle. Specifically,
\cite{tsai} and \cite{sar4} show respectively, that $\{X_{1}(t)+at\}$ is 
then a tight collection, of zero-mean variables.

Little is known about the challenging out of equilibrium behavior 
of \abbr{Atlas}$_\infty$. From \cite[Theorem 1]{pp} we learn that
at critical spacing density $\lambda=2$ the unit drift to the 
leftmost particle compensates the spreading of bulk particles 
to the left, thereby keeping the gaps at equilibrium. Such
interplay between spacing density and drift is re-affirmed by 
\cite{CDSS}, which shows that initial spacing law 
$\mu^{(\infty,\lambda)}_\star$ induces the a.s. convergence 
$t^{-1/2} X_1(t) \to \kappa$ 
% as $t \to \infty$,
with $\textrm{sign}(\kappa)=\textrm{sign}(2-\lambda)$. 
From \cite[Theorem 4.7]{sar2} it follows that if 
\abbr{Atlas}$_\infty$ starts at spacing law $\nu_0$ 
%of $\underline{Z}(0)$ 
which stochastically dominates $\mu_\star^{(\infty,2)}$ (e.g. when
$\nu_0 = \mu_\star^{(\infty,\lambda)}$ any $\lambda \le 2$),
then the finite dimensional distributions (\abbr{fdd}) of 
$\underline{Z}(t)$ converge to those of $\mu_\star^{(\infty,2)}$ 
as $t \to \infty$. However, nothing else is known about 
the domain of attraction of $\mu_\star^{(\infty,2)}$ 
(or about those of $\{\mu_\star^{(\infty,2,a)}$, $a>0\}$). For 
example, what happens when $\nu_0 = \mu_\star^{(\infty,\lambda)}$
with $\lambda>2$? 

Our main result, stated next, answers this question by 
drastically increasing the established domain of attraction of
$\mu_\star^{(\infty,2)}$ spacing for \abbr{Atlas}$_\infty$.
\begin{thm}\label{thm:main}
Suppose the \abbr{Atlas}$_\infty$ process starts at 
$\underline{Z}(0)=(z_j)_{j \ge 1}$ such that for 
eventually non-decreasing $\theta(m)$
with $\inf_m \{\theta(m-1)/\theta(m)\} > 0$
and $\beta \in [1,2)$,  
\begin{align}
 \limsup_{m\to\infty} & \frac{1}{m^\beta \theta(m)} \sum_{j=1}^m  z_j \ < \infty\,, \qquad\qquad \label{eq:E2}\\ 
  \limsup_{m\to\infty} & \frac{1}{m^\beta \theta(m)} \sum_{j=1}^m (\log z_j)_- \ 
  < \infty\,, \qquad\qquad \label{eq:E1}\\  
\label{eq:E3}
\liminf_{m \to \infty} & \frac{1}{m^{\beta'} \theta(m)} \sum_{j=1}^m z_j = \infty \,, \qquad \beta':=\beta^2/(1+\beta), 
\end{align}
further assuming in case $\beta=1$ that $\theta(m) \ge \log m$.
Then, the \abbr{fdd} of $\underline{Z}(t)$ for the
\abbr{Atlas}$_\infty$ converge as $t \to \infty$ 
to  those of $\mu_\star^{(\infty,2)}$.
\end{thm}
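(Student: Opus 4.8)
Fix $k\in\nn$; it is enough to show that the law of $(Z_1(t),\dots,Z_k(t))$ for \abbr{Atlas}$_\infty$ converges weakly to $\bigotimes_{j=1}^{k}\mathbf{Exp}(2)$. The plan is to compare with the finite system \abbr{Atlas}$_m$ for a threshold $m=m(t)\to\infty$ chosen below. Couple \abbr{Atlas}$_\infty$ and \abbr{Atlas}$_m$ through common driving Brownian motions, starting \abbr{Atlas}$_m$ from the truncated initial spacings $(z_1,\dots,z_{m-1})$, which by \eqref{eq:E2} still meets \eqref{maincond1}. Along this coupling one has the monotonicity $Z^{(m)}_j(t)\ge Z^{(m+1)}_j(t)\ge\cdots\ge Z^{(\infty)}_j(t)$ for every fixed $j,t$, a standard rank-based comparison reflecting that appending particles on the right only compresses the existing gaps. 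Recalling from \eqref{eq:rho-n} that $\mu_\star^{(m,2)}$ is the \emph{unique} stationary law for the \abbr{Atlas}$_m$ spacings, whose first $k$ coordinates are distributed as $\bigotimes_{j=1}^{k}\mathbf{Exp}(2(1-j/m))$, converging to $\bigotimes_{j=1}^{k}\mathbf{Exp}(2)$ as $m\to\infty$, the theorem reduces to two quantitative facts about this triangular array: a \emph{propagation} bound $\bE[\,Z^{(m(t))}_k(t)-Z^{(\infty)}_k(t)\,]\to0$, and a \emph{relaxation} bound stating that $(Z^{(m(t))}_1(t),\dots,Z^{(m(t))}_k(t))$ converges weakly to $\bigotimes_{j=1}^{k}\mathbf{Exp}(2)$. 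The first makes the nonnegative difference $Z^{(m(t))}_{\le k}(t)-Z^{(\infty)}_{\le k}(t)$ vanish in $L^1$, so together they pin down the weak limit of $Z^{(\infty)}_{\le k}(t)$.

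\textbf{Propagation.} The discrepancy $Z^{(m)}_j-Z^{(\infty)}_j$ originates entirely at particles $\ge m$, where \abbr{Atlas}$_m$ has a free right boundary while \abbr{Atlas}$_\infty$ feels $Z_m$ and beyond. I would bound $\bE[\,Z^{(m)}_k(t)-Z^{(\infty)}_k(t)\,]$ — e.g.\ via a weighted telescoping of such differences that collapses onto boundary local times — by a quantity that is small once the ``buffer'' $\sum_{i=k+1}^{m}z_i$ is large compared to $\sqrt t$: the boundary disturbance at particle $m$ reaches particle $k$ only through order $(m-k)$ successive collisions, hence is negligible by time $t$ when the first $m$ particles span a long enough interval. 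This is precisely the content of the lower bound \eqref{eq:E3}: choosing $m(t)$ so that $m(t)^{\beta'}\theta(m(t))\gg t^{1/2}$ forces the propagation bound to $0$.

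\textbf{Relaxation.} This is the core new estimate advertised in the abstract. For \abbr{Atlas}$_m$ started from $(z_i)_{i<m}$ I would show the first $k$ spacings reach their $\mu_\star^{(m,2)}$-marginal by time $t$, with relaxation driven by two ingredients. First, the total length $\sum_{j<m}Z^{(m)}_j$ has drift $-1$ up to boundary local-time corrections (the drifted leftmost particle steadily compresses the array), so an initial excess over the equilibrium value $\Theta(m\log m)$, of size $\sum_{i<m}z_i\lesssim m^{\beta}\theta(m)$ by \eqref{eq:E2}, is dissipated in time $\Theta(m^{\beta}\theta(m))$; symmetrically a deficit from small initial gaps is filled at a comparable rate. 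Second, the residual distance to $\mu_\star^{(m,2)}$ is controlled via the reversible structure of the \abbr{Atlas}$_m$ spacing dynamics — bounding, after a short regularizing window, the relative entropy with respect to $\mu_\star^{(m,2)}$, where \eqref{eq:E2} handles the contribution of atypically large gaps and \eqref{eq:E1} that of atypically small ones (the logarithmic slack $\theta(m)\ge\log m$ at $\beta=1$ absorbing a leftover logarithm), and then dissipating that entropy through the Dirichlet form. Both ingredients require $t\gg m^{\beta}\theta(m)$ up to lower-order factors, i.e.\ one must choose $m(t)$ with $m(t)^{\beta}\theta(m(t))\ll t$.

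\textbf{Calibration and the main obstacle.} The two requirements $m(t)^{\beta'}\theta(m(t))\gg t^{1/2}$ and $m(t)^{\beta}\theta(m(t))\ll t$ are simultaneously achievable exactly when $1/(2\beta')\le 1/\beta$, that is $\beta\ge1$, with $\beta=1$ the borderline case in which the hypothesis $\theta(m)\ge\log m$ supplies the missing slack, and $\beta'=\beta^2/(1+\beta)$ being the break-even exponent between the propagation and relaxation constraints; the restriction $\beta<2$ is what keeps the initial configuration sub-diffusively spread, as needed in the relaxation step. The principal obstacle is the relaxation estimate itself: because the spectral gap of \abbr{Atlas}$_m$ degenerates as $m\to\infty$, no crude bound survives, and one must extract the sharp dependence on $m$ and on the initial functionals in \eqref{eq:E2}--\eqref{eq:E1} from the interplay of the compressive drift with the reversible dynamics. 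A secondary difficulty is making the monotone coupling and the quantitative propagation bound fully rigorous for this infinite reflected system.
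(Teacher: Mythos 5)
Your overall architecture --- approximate \abbr{Atlas}$_\infty$ by \abbr{Atlas}$_{m}$ at a scale $m=m(t)\to\infty$, use rank-based monotone comparisons, prove a quantitative relaxation estimate for the finite array via entropy/Dirichlet-form dissipation, and control propagation of the right-boundary effect through a buffer --- is indeed the paper's architecture, but both pivotal steps are left as assertions, and in each case the missing ingredient is substantive. For the relaxation step you start \abbr{Atlas}$_m$ from the point mass at $(z_1,\dots,z_{m-1})$, whose relative entropy with respect to $\mu_\star^{(m,2)}$ is infinite, so no entropy-dissipation bound (such as Proposition \ref{prop:finite}/Corollary \ref{cor:fdd}) applies directly; your ``short regularizing window'' only gestures at this, and proving that the time-$O(1)$ law of an interacting reflected system started from a point has entropy of the correct order $m^\beta\theta(m)$ is itself a nontrivial task you do not carry out. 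The paper's resolution is the key idea you are missing: sandwich the deterministic start, under common driving Brownian motions, between the conditioned equilibrium laws $\mu_\star^{(m,2)}(\cdot\mid \underline{Z}\le\underline{z})$ and $\mu_\star^{(m,2)}(\cdot\mid \underline{Z}\ge\underline{z})$, whose relative entropies are exactly $-\log$ of orthant probabilities and hence are bounded by $2\sum_j z_j$ and $2m\log m+\sum_j(\log z_j)_-$ --- precisely where \eqref{eq:E2}, \eqref{eq:E1} and the $\log m$ slack at $\beta=1$ enter. Moreover, identifying the limit for the triangular array requires the lower-semicontinuous Donsker--Varadhan functional together with Lyapunov-based tightness uniform in $m$ (as in Corollary \ref{cor:fdd}); the uniqueness of the stationary law of \abbr{Atlas}$_m$ for fixed $m$, which you invoke, and your (correct) observation that the spectral gap degenerates, give nothing uniform along $m(t)\to\infty$.

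For the propagation step, the proposed $L^1$ bound $\bE[Z^{(m)}_k(t)-Z^{(\infty)}_k(t)]\to0$ via ``telescoping onto boundary local times'' is unsubstantiated, and your calibration is wrong: you claim a buffer $\gg\sqrt t$ suffices, but for sparse initial data ($\beta>1$) the dominant mechanism by which the left block can reach particle $m$ is not diffusion but the unit drift. Pushing the roughly $\Gamma^{1/\beta}$ particles initially within distance $\Gamma$ of the origin out to distance $\Gamma$ requires total displacement of order $\Gamma^{1+1/\beta}$, while the drift supplies total displacement $t$; hence the block can advance a distance of order $t^{\beta/(1+\beta)}\gg\sqrt t$. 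This is exactly what the averaging over the lowest $\ell$ particles in \eqref{eq:3} and the choices \eqref{dfn:Gamma} encode, and it is why \eqref{eq:E3} must carry the exponent $\beta'=\beta^2/(1+\beta)$ rather than $\beta/2$: were $\sqrt t$ the true threshold, the theorem would hold under a strictly weaker hypothesis for every $\beta>1$, so your justification of the exponent does not hold together (you recover $\beta'$ only by quoting \eqref{eq:E3}). The paper also replaces your $L^1$ estimate by something simpler and more robust: a high-probability coincidence argument (Lemmas \ref{lem:top-control} and \ref{lem:bulk-control} plus Borel--Cantelli) showing that $\sup_{s\le t_m}X_k(s)<\Gamma_m\le\inf_{s\le t_m,\,i>m}Y_i(s)$ eventually, on which event the first $k-1$ spacings of \abbr{Atlas}$_{m}$ and \abbr{Atlas}$_\infty$ coincide exactly, which is all that is needed.
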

For example, if $\lambda_j \in [c^{-1},c]$ and $\lambda_j z_j$ are i.i.d. 
of finite mean
such that $\bE (\log z_1)_- < \infty$, then taking $\theta(m) \equiv 1$ 
and $\beta>1$ small enough for $\beta'<1$, it follows by
the \abbr{slln} that \eqref{eq:E2}--\eqref{eq:E3} hold a.s.
Namely, when $\nu_0$ is any such product measure,  
$\underline{Z}(t)$ converges in \abbr{FDD} to 
$\mu_\star^{(\infty,2)}$. For independent 
$z_j \sim {\bf Exp}(\lambda_j)$ this applies
even when $\lambda_j \uparrow \infty$ slow enough so
$\sum_{j=1}^m \lambda_j^{-1}/(\sqrt{m} \log m)$ 
diverges (and hence \eqref{eq:E3} holds a.s.), or 
when $\lambda_j \downarrow 0$ slow enough to have
$m^{-\beta} \sum_{j=1}^m \lambda_j^{-1}$ 
bounded (for some $\beta<2$, so \eqref{eq:E2} holds).  
\begin{rem}
As matter of comparison, note that 
if $z_j$ decays to zero slower than $j^{-1/2} \log j$, 
then $\{z_j\}$ satisfies  
the hypothesis of Theorem \ref{thm:main} (for $\beta=1$), 
while for measures of 
the form \eqref{eq:rho-a-lambda}, generically $z_j$ decays like $j^{-1}$.
\end{rem}

The key to proving Theorem \ref{thm:main} is a novel control 
of the \abbr{Atlas}$_m$ particle 
spacing distance from equilibrium at time $t$, in terms of
the relative entropy distance 
of its initial law from equilibrium.
\begin{prop}\label{prop:finite}  
Start the \abbr{Atlas}$_m$ system at initial spacing law $\nu_0^{(m)}$
of finite entropy $H\big(\nu_{0}^{(m)}|\mu_\star^{(m,2)}\big)$
{and finite second moment $\int \|{\bf z}\|^2 d\nu_0^{(m)}$.}
Then, for any $t>0$ the spacing law $\nu^{(m)}_t$ is 
absolutely continuous with respect to the marginal 
of $\mu_\star^{(\infty,2)}$ and the
Radon-Nikodym derivative $g_t$ satisfies
\begin{equation}
  \label{eq:25n}
\int
  \Big\{
  % [\partial_{z_1} \sqrt{g_t}]^2 +
  \sum_{j=1}^{m-1} \big[(\partial_{z_{j-1}} 
      - \partial_{z_{j}}) \sqrt{g_t} \big]^2  \Big\} 
\prod_{j=1}^{m-1} 2 e^{-2 z_j} d z_j        
\le \frac{1}{2t}  H(\nu_0^{(m)} | \mu^{(m,2)}_\star)  + \frac 1m\,.
\end{equation}
\end{prop}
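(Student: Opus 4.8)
The plan is to establish \eqref{eq:25n} by an entropy--dissipation (de Bruijn) estimate carried out with respect to the \emph{reversible} law $\pi:=\mu_\star^{(m,2)}$ of the \abbr{Atlas}$_m$ spacing process, and then to transfer it to the reference measure $\mu:=\bigotimes_{k=1}^{m-1}{\bf Exp}(2)$, the $(m-1)$-marginal of $\mu_\star^{(\infty,2)}$ appearing on the left of \eqref{eq:25n}.

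\emph{Step 1: reversibility and the Dirichlet form.} By \eqref{eq:4-gen} the spacing process is, in the interior of $\bR_+^{m-1}$, the diffusion with generator $\tfrac12\sum_{j,k}a_{jk}\partial_{z_j}\partial_{z_k}-\partial_{z_1}$, where $a$ is the tridiagonal matrix with $a_{jj}=2$, $a_{j,j\pm1}=-1$, reflected at each face $\{z_j=0\}$ along $ae_j=2e_j-e_{j-1}-e_{j+1}$, i.e.\ along the $a$--metric normal to that face. Since $k\mapsto 2(1-k/m)$ is affine one checks $a\nabla(-\log\pi)=2e_1$, so the interior drift equals $-\tfrac12 a\nabla(-\log\pi)$; together with the skew-symmetry of the reflection (\cite{hw,Wil1995}) this gives reversibility with respect to $\pi$ (consistent with \eqref{eq:rho-n}), with Dirichlet form $\mathcal E(u)=\tfrac12\int Q(\nabla u)\,d\pi$, where $Q(x):=\sum_{\ell=1}^{m}(x_\ell-x_{\ell-1})^2$ with $x_0=x_m:=0$. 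Note the integrand in \eqref{eq:25n} is exactly $Q(\nabla\sqrt{g_t})$ with the one term $(\partial_{z_{m-1}}\sqrt{g_t})^2$ deleted. By parabolic smoothing for reflecting diffusions, $f_t:=d\nu^{(m)}_t/d\pi$ is for each $t>0$ a smooth positive density; since $\pi$ and $\mu$ are mutually absolutely continuous on $(0,\infty)^{m-1}$ this already yields $\nu^{(m)}_t\ll\mu$ with $g_t=f_t\,(d\pi/d\mu)$.

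\emph{Step 2: monotone dissipation.} Differentiating $t\mapsto H(\nu^{(m)}_t|\pi)$ and integrating by parts (boundary terms vanishing by the reflection condition) I would get the de Bruijn identity
\[
\frac{d}{dt}H(\nu^{(m)}_t|\pi)=-\int\frac{Q(\nabla f_t)}{f_t}\,d\pi=-2\int Q(\nabla\sqrt{f_t})\,d\pi=:-2D(f_t).
\]
After the linear change of coordinates $w=a^{-1/2}z$ the generator becomes $\tfrac12\Delta_w$ plus an affine drift on the \emph{convex} polyhedron $a^{-1/2}(\bR_+^{m-1})$ with \emph{normal} reflection, reversible for the log-affine (hence log-concave) image of $\pi$; this is the standard $CD(0,\infty)$ situation, so $t\mapsto D(f_t)$ is non-increasing. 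Hence for every $t>0$,
\[
2t\,D(f_t)\ \le\ \int_0^t 2D(f_s)\,ds\ =\ H(\nu^{(m)}_0|\pi)-H(\nu^{(m)}_t|\pi)\ \le\ H(\nu^{(m)}_0|\pi),
\]
so $D(f_t)\le\tfrac{1}{2t}H(\nu^{(m)}_0|\pi)$.

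\emph{Step 3: change of reference measure.} Since $\sqrt{d\pi/d\mu}$ is proportional to $\prod_{k=1}^{m-1} e^{(k/m)z_k}$, one has $\partial_{z_\ell}\log\sqrt{d\pi/d\mu}=\ell/m$, whence (with $\partial_{z_0}\equiv 0$)
\[
\partial_{z_{j-1}}\sqrt{g_t}-\partial_{z_j}\sqrt{g_t}=\sqrt{\tfrac{d\pi}{d\mu}}\,\Big[\big(\partial_{z_{j-1}}\sqrt{f_t}-\partial_{z_j}\sqrt{f_t}\big)-\tfrac1m\sqrt{f_t}\Big],\qquad 1\le j\le m-1.
\]
Squaring, summing over $j$, integrating $d\mu$ (so that $(d\pi/d\mu)\,d\mu=d\pi$), and using the telescopings $\sum_j(\partial_{z_{j-1}}\sqrt{f_t}-\partial_{z_j}\sqrt{f_t})=-\partial_{z_{m-1}}\sqrt{f_t}$ and $\sum_j(\partial_{z_{j-1}}\sqrt{f_t}-\partial_{z_j}\sqrt{f_t})^2=Q(\nabla\sqrt{f_t})-(\partial_{z_{m-1}}\sqrt{f_t})^2$, the left-hand side of \eqref{eq:25n} equals
\[
\int Q(\nabla\sqrt{f_t})\,d\pi+\int\Big\{\tfrac{m-1}{m^2}f_t-\big(\partial_{z_{m-1}}\sqrt{f_t}\big)^2+\tfrac2m\sqrt{f_t}\,\partial_{z_{m-1}}\sqrt{f_t}\Big\}\,d\pi.
\]
Completing the square bounds the bracket pointwise by $\tfrac1m f_t$, so the left side of \eqref{eq:25n} is at most $D(f_t)+\tfrac1m\le\tfrac{1}{2t}H(\nu^{(m)}_0|\pi)+\tfrac1m$, as claimed.

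\emph{Main obstacle.} There is no conceptual obstruction, and Step~3 is routine; the real work lies in making Steps~1--2 rigorous: parabolic regularity and finiteness of all the integrals, the \emph{defect-free} de Bruijn identity together with its integrated form $\int_0^t 2D(f_s)\,ds=H(\nu^{(m)}_0|\pi)-H(\nu^{(m)}_t|\pi)$ valid down to $t=0$, and the curvature argument for monotonicity of $D(f_t)$ on the polyhedral, obliquely reflecting state space. This is precisely where the finite second moment of $\nu^{(m)}_0$ is used, to control tails and legitimate the integrations by parts.
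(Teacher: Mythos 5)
Your proposal follows essentially the same route as the paper's proof: reversibility of $\mu_\star^{(m,2)}$ with Dirichlet form $\tfrac12\int Q(\nabla\cdot)\,d\pi$, the integrated entropy--dissipation bound $2tD(f_t)\le H(\nu_0^{(m)}|\pi)$ via monotonicity of the dissipation, and then the change of reference measure to $\bigotimes{\bf Exp}(2)$; your Step~3, including the completing-the-square bound of the bracket by $\tfrac1m f_t$, is exactly the paper's computation \eqref{eq:bdDF}, just written as $g_t=f_t\,d\pi/d\mu$ instead of $f_t=h_m g_t$. The one substantive difference is how Steps~1--2, which you rightly flag as ``the real work,'' get discharged: the paper does not attempt parabolic regularity (it even notes in a footnote that it found no reference for strong solutions satisfying the boundary conditions \eqref{eq:core}); instead it works with Wasserstein solutions and cites \cite[Theorem 6.6]{amb-sav}, which directly supplies $\sqrt{f_t}\in W^{1,2}(\mu_\star^{(2,m)})$, the defect-free identity \eqref{eq:6}, and the monotonicity of $t\mapsto\cD_m(\sqrt{f_t})$ --- and this citation is precisely where the finite second moment enters (the initial law must lie in the Wasserstein-$2$ space), rather than for controlling integrations by parts as you suggest. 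Your observation that after the change of variables $w=a^{-1/2}z$ one gets normal reflection on a convex cone with a log-concave reversible density is exactly the structural reason this gradient-flow machinery applies, so nothing in your outline would fail; but as written, the smoothing claim, the energy-dissipation identity and the monotonicity of the dissipation remain asserted rather than proved, whereas the paper closes them with the single reference above. One small slip: in your de Bruijn chain the middle expression should be $-\tfrac12\int Q(\nabla f_t)/f_t\,d\pi$ (the Dirichlet form carries the factor $\tfrac12$), which indeed equals $-2\int Q(\nabla\sqrt{f_t})\,d\pi$, so your final constants are unaffected.
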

Combining Proposition \ref{prop:finite}
with Lyapunov functions for finite \abbr{Atlas} systems 
(constructed for example in \cite{dw,ips}), yields the 
following information on convergence of
the \abbr{Atlas}$_m$ particle spacing  \abbr{fdd}
at times $t_m \to \infty$ fast enough.
\begin{cor}\label{cor:fdd} 
Starting the \abbr{Atlas}$_m$ system at initial spacing law $\nu_0^{(m)}$
of finite second moment, for any fixed $k \ge 1$, 
the joint density of $(Z_1(t_m),\ldots,Z_k(t_m))$ with respect 
to the corresponding marginal of $\mu_\star^{(\infty,2)}$, 
converges to one, provided $t_m$ is large enough so both
$t_m^{-1} 
H\big(\nu_{0}^{(m)}|\mu_\star^{(m,2)}\big) \to 0$,
and $t_m^{-1} \sum_{j=1}^k Z_j(0) \to 0$ 
(in $\nu_0^{(m)}$-probability), as $m \to \infty$.
\end{cor}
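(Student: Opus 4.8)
The plan is to feed the bound \eqref{eq:25n} of Proposition~\ref{prop:finite} into a telescoping-plus-tensorization argument, and to use Lyapunov control for finite \abbr{Atlas} systems to pin down the normalization. Write $\mu^{(m)}:=\bigotimes_{j=1}^{m-1}{\bf Exp}(2)$ and $\mu^{(k)}:=\bigotimes_{j=1}^{k}{\bf Exp}(2)$ for the marginals of $\mu^{(\infty,2)}_\star$, let $\cD_m(f):=\int\sum_{j=1}^{m-1}\big[(\partial_{z_{j-1}}-\partial_{z_j})f\big]^2\,d\mu^{(m)}$ (with the convention $\partial_{z_0}\equiv0$) be the left side of \eqref{eq:25n}, and set $g_t:=d\nu^{(m)}_t/d\mu^{(m)}$, $g^{(k)}_t(z_1,\dots,z_k):=\bE_{\mu^{(m)}}[\,g_t\mid z_1,\dots,z_k\,]$, the density with respect to $\mu^{(k)}$ of the law of $(Z_1(t),\dots,Z_k(t))$. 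Because $t_m^{-1}H(\nu^{(m)}_0|\mu^{(m,2)}_\star)\to0$ forces $H(\nu^{(m)}_0|\mu^{(m,2)}_\star)<\infty$ for all large $m$, and the finite-second-moment hypothesis is in force, Proposition~\ref{prop:finite} applies at $t=t_m$; its right side $\tfrac{1}{2t_m}H(\nu^{(m)}_0|\mu^{(m,2)}_\star)+\tfrac1m\to0$, so $\cD_m(\sqrt{g_{t_m}})\to0$.

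The first---and I expect the delicate---step is to descend from this global bound to a bound in the first $k$ coordinates with constants free of $m$. Using $(\partial_{z_0}-\partial_{z_1})\sqrt{g_t}=-\partial_{z_1}\sqrt{g_t}$ and $\partial_{z_j}\sqrt{g_t}=\partial_{z_{j-1}}\sqrt{g_t}-(\partial_{z_{j-1}}-\partial_{z_j})\sqrt{g_t}$, a telescoping over $j$ steps and the triangle inequality give $\|\partial_{z_j}\sqrt{g_t}\|_{L^2(\mu^{(m)})}\le j\,\cD_m(\sqrt{g_t})^{1/2}$, hence $\sum_{j\le k}\|\partial_{z_j}\sqrt{g_{t_m}}\|^2_{L^2(\mu^{(m)})}\to0$. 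Freezing the coordinates $(z_{k+1},\dots,z_{m-1})$, applying the Poincar\'e inequality for $\mu^{(k)}$ (whose constant is absolute, hence $m$-free), and integrating out those coordinates yields
\[
\bE_{\mu^{(m)}}\big[\,\mathrm{Var}\big(\sqrt{g_{t_m}}\,\big|\,z_{k+1},\dots,z_{m-1}\big)\,\big]\ \longrightarrow\ 0.
\]
Writing $\psi_m:=\bE_{\mu^{(m)}}[\sqrt{g_{t_m}}\mid z_{k+1},\dots,z_{m-1}]$, this is the statement $\|\sqrt{g_{t_m}}-\psi_m\|_{L^2(\mu^{(m)})}\to0$, i.e.\ that $g_{t_m}$ becomes asymptotically independent of $(z_1,\dots,z_k)$ under the product reference measure.

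To conclude, I would exploit that $g^{(k)}_{t_m}$ is a probability density. From $\|\sqrt{g_{t_m}}\|_{L^2(\mu^{(m)})}=1$ and $\|\psi_m\|_{L^2(\mu^{(m)})}\le1$ (conditional expectations contract $L^2$) one gets $\|g_{t_m}-\psi_m^2\|_{L^1(\mu^{(m)})}\to0$; taking the conditional expectation given $(z_1,\dots,z_k)$, which is independent of $\psi_m$ under $\mu^{(m)}$, gives $\|g^{(k)}_{t_m}-a_m\|_{L^1(\mu^{(k)})}\to0$ for the \emph{constant} $a_m:=\bE_{\mu^{(m)}}[\psi_m^2]$, and since $\int g^{(k)}_{t_m}\,d\mu^{(k)}=1$ necessarily $a_m\to1$; hence $g^{(k)}_{t_m}\to1$ in $L^1(\mu^{(k)})$, which is the assertion. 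The Lyapunov functions of \cite{dw,ips} and the hypothesis $t_m^{-1}\sum_{j=1}^k Z_j(0)\to0$ (in $\nu^{(m)}_0$-probability) enter here to make this robust: restricting to the high-probability event $\{\sum_{j\le k}Z_j(0)\le\ve t_m\}$ and invoking a Lyapunov bound for \abbr{Atlas}$_m$---once $t_m$ exceeds the size of the leading gaps these have relaxed, so $\bE_{\nu^{(m)}_{t_m}}[\sum_{j\le k}Z_j]$ stays bounded uniformly in $m$---gives uniform tightness of the laws of $(Z_1(t_m),\dots,Z_k(t_m))$, ruling out any escape of mass and transferring the convergence to the \abbr{fdd}.

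I expect the main obstacle to be precisely the dimension reduction of the second paragraph. The global spectral gap of the Dirichlet form in \eqref{eq:25n} degenerates as $m\to\infty$, so a Poincar\'e inequality cannot be applied to $\sqrt{g_{t_m}}$ directly; the point is that the discrete gradients $(\partial_{z_{j-1}}-\partial_{z_j})$, $j\le k$, are boundedly invertible on the first $k$ coordinates with constants independent of $m$, which lets one telescope down to a fixed number of coordinates before invoking any spectral-gap estimate, at which stage the Lyapunov tightness is what guarantees that the normalization $\int g^{(k)}_{t_m}\,d\mu^{(k)}=1$ survives the limit.
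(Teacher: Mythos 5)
Your argument is correct, but it follows a genuinely different route from the paper's. The paper goes from \eqref{eq:25n} to the $k$-dimensional marginal via convexity of $h \mapsto \widetilde \cD_m(\sqrt{h})$ (Jensen over the integrated-out coordinates, see \eqref{eq:convex-df}), identifies weak limit points through lower semicontinuity of the Donsker--Varadhan functional $\widetilde I_{k+1}$ of \cite{DV1} (any limit point has $\widetilde \cD_{k+1}(\sqrt g)=0$, hence $g \equiv 1$), and then must prove tightness separately, via Sarantsev's comparison results and the Lyapunov bound of Lemma \ref{lem:Atlask-tight} --- which is exactly where the hypothesis $t_m^{-1}\sum_{j\le k} Z_j(0) \to 0$ enters. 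You instead telescope the difference gradients (using $\partial_{z_0}\equiv 0$) to bound $\|\partial_{z_j}\sqrt{g_{t_m}}\|_{L^2(q_m d{\bf z})}$ for $j \le k$ with constants depending only on $k$, invoke the spectral gap of the one-dimensional ${\bf Exp}(2)$ law (which is finite, standard, and tensorizes), conditionally on $(z_{k+1},\dots,z_{m-1})$, and finish with elementary conditional-expectation and $L^2$-to-$L^1$ manipulations to get $\|g^{(k)}_{t_m}-1\|_{L^1(\mu^{(k)})}\to 0$. This buys something: your conclusion is total-variation convergence of the $k$-marginal, which is stronger than the paper's weak-limit identification, and it makes tightness automatic --- so in your proof the second hypothesis, Lemma \ref{lem:Atlask-tight}, and your closing paragraph about Lyapunov functions ``preserving the normalization'' are superfluous (the normalization $\int g^{(k)}_{t_m}\,d\mu^{(k)}=1$ is exact for every $m$, and your step $a_m\to 1$ uses only that). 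In particular your route sidesteps the mass-shift-invariance caveat the paper states before Lemma \ref{lem:Atlask-tight}: since the $j=1$ term in \eqref{eq:2M} is $[\partial_{z_1}h]^2$, the telescoped form controls the full gradient in the first $k$ coordinates uniformly in $m$, and the exponential Poincar\'e inequality then rules out escape of mass for fixed $k$. Two small points you should make explicit to be complete: that $\sqrt{g_t}\in W^{1,2}(q_m d{\bf z})$ (so that a.e.\ section is absolutely continuous and the conditional Poincar\'e inequality applies), which follows from the regularity $\sqrt{f_t}\in W^{1,2}(\mu_\star^{(m,2)})$ supplied by \cite{amb-sav} in the proof of Proposition \ref{prop:finite} together with smoothness and positivity of $p_m/q_m$; and a reference for the Poincar\'e inequality of the exponential distribution.
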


\begin{rem}
For concreteness
% and to simplify the exposition, 
we focused on the \abbr{Atlas}$_\infty$ process, but 
a similar proof applies for 
%infinite 
systems of competing Brownian particles 
where $\sigma_j^2 \equiv 1$,  $\gamma_1>0$ and $j \mapsto \gamma_j$ is 
non-increasing and eventually zero. 
We further expect this to extend to some of the 
two-sided infinite systems considered in \cite[Sec. 3]{sar-2sided},
and that such an approach may help in proving 
the attractivity of $\mu^{(\infty,2,a)}_\star$
%, $a>0$ 
in the \abbr{Atlas}$_\infty$ system. 
\end{rem}

In Section \ref{sec:finite-entr-init} we 
prove Proposition \ref{prop:finite} and Corollary \ref{cor:fdd},
whereas in Section \ref{sec:coupl-arbitr-inti}, we 
deduce Theorem \ref{thm:main} from Corollary \ref{cor:fdd} by
a suitable coupling of the \abbr{Atlas}$_m$
system and the left-most $k$ particles of \abbr{Atlas}$_\infty$ 
up to time $t_m$.

\section{Entropy control for \texorpdfstring{\abbr{Atlas}$_{m}$}{Atlasm}: 
Proposition \ref{prop:finite} and Corollary \ref{cor:fdd}}
\label{sec:finite-entr-init}

% \subsection{Laboratory coordinates}
% \label{sec:labor-coord}

Recall \eqref{eq:rbm-gen}, which for \abbr{Atlas}$_{m}$ is
\begin{equation}
  \label{eq:rbm}
  X_j(t) = X_j(0) + {\bf 1}_{\{j=1\}} t + B _j (t) + L_{j-1} (t) -
    L_{j}(t) \qquad j=1, \ldots,m \,,
\end{equation}
where $L_j(t)$ denotes the local time on $\{s \in [0,t] : Z_j(s) = 0 \}$ 
for $1 \le j < m$, with $L_0(t) = L_{m}(t) \equiv 0$. 
Let $\bX_m := \{{\bf x} : x_1 \le x_2 \ldots \le x_m\} \subset \bR^m$.
The generator of the $\bX_m$-valued Markov process $\underline{X}(t)$ 
is then 
\begin{equation}
  \label{eq:5}
  (\widehat \cL_m g) ({\bf x}) := \frac 12 \sum_{j=1}^m \partial_{x_j}^2 +
   \partial_{x_1} 
\end{equation}
defined on the core of smooth bounded functions $g(\cdot)$ on
$\bX_m$ 
 satisfying
the Neumann boundary conditions
$$
\left(\partial_{x_j} - \partial_{x_{j+1}}\right)g \big|_{x_j=x_{j+1}} = 0,
\qquad j =1,\dots,m-1 \,.
$$  
Specializing \eqref{eq:4-gen} the corresponding $\bR_+^{m-1}$-valued spacings 
$Z_j(t) = X_{j+1}(t) - X_{j}(t)$
are then such that for $1 \le j \le m-1$,
\begin{equation}
  \label{eq:4}
 Z_j(t) =  Z_j(0) - {\bf 1}_{\{j=1\}} t + B_{j+1}(t)  -  B_{j}(t) +
 2L_{j}(t) - L_{j+1}(j) - L_{j-1}(t) \,.
\end{equation}
Let $\Delta^{(m)}$ denote the discrete Laplacian with 
Dirichlet boundary conditions at $0$ and $m$. Hence, 
using hereafter the convention of 
$\partial_{z_0} = \partial_{z_m} \equiv 0$,
\begin{equation}
  \label{eq:29}
    \Delta^{(m)} \partial_{z_j} := \partial_{z_{j-1}} -2 \partial_{z_j}
     + \partial_{z_{j+1}}  \,, \qquad j = 1,\ldots,m-1
  %  \;\; (\textrm{while} \;\; a_0=a_m=0)
    \,.
\end{equation}
Following this convention, in combination with the rule 
\begin{equation*}
 % \partial_{x_1}   = - \partial_{z_1}\,, \qquad
    \partial_{x_j} = \partial_{z_{j-1}} - \partial_{z_j}\,,\qquad \qquad 
    j=1, \ldots,
    m\,,\qquad 
    % \partial_{x_m} = \partial_{z_{m-1}} \,,
\end{equation*}
the generator of the $\bR_+^{m-1}$-valued Markov process
$\underline{Z}(t)$ is thus
\begin{equation}
  \label{eq:genfin}
    \cL_m = \frac 12  \sum_{j=1}^{m} \left(\partial_{z_{j-1}}
      - \partial_{z_{j}} \right)^2 -  \partial_{z_{1}} 
  =-\frac 12 \sum_{j=1}^{m-1}\partial_{z_{j}} (\Delta^{(m)}  \partial_{z_{j}} )  -  \partial_{z_{1}} 
\end{equation}
defined on the core $\cC_m$ of local, 
smooth functions $h({\bf z})$ such that  
\begin{equation}
  ( \Delta^{(m)}\partial_{z_{j}}) h \big|_{z_j = 0} = 0, 
  \qquad j=1,\ldots ,m-1\,. \qquad
\label{eq:core}
\end{equation}
Recall that $\mu_\star^{(m,2)}(\cdot)$ is the (unique) 
stationary law of $\underline{Z}(t)$ for \abbr{Atlas}$_m$.
In fact, for the density on $\bR_+^{m-1}$ of $\mu_\star^{(m,2)}(\cdot)$, 
\begin{equation}
  \label{eq:1}
  p_m ({\bf z}) := \prod_{j=1}^{m-1} \alpha_j e^{-\alpha_j z_j} \,, \quad
  \qquad \alpha_j :=  2 (1 - j/m) \,,
\end{equation}
a direct calculation shows that 
\begin{equation}\label{eq:ident-alpha}
 \frac 12 \sum_{j=1}^{m-1} \alpha_j \Delta^{(m)} \partial_{z_{j}} = -  \partial_{z_1} \,.
\end{equation}
Combining the \abbr{lhs} of \eqref{eq:genfin} with \eqref{eq:ident-alpha}
yields the symmetric form of the generator
\begin{equation}
  \label{eq:15}
    \cL_m = - \frac {1}{2p_m} 
     \sum_{j=1}^{m-1} \partial_{z_j} \big( p_m \Delta^{(m)} \partial_{z_{j}}   \big) 
\end{equation}
% yielding for any smooth $g$ and $h$, 
% \[
% 2 p_m g (\cL_m h) 
% + p_m \sum_{j=1}^{m} [(\partial_{z_{j-1}}
%        - \partial_{z_{j}}) g] [(\partial_{z_{j-1}} 
%        - \partial_{z_{j}}) h] 
% + \sum_{j=1}^{m-1} 
% \partial_{z_j} [ g p_m (\Delta^{(m)} \partial_{z_j}) h ] = 0 \,.
% \]
Using \eqref{eq:15} and integration by parts, we have 
 for bounded, smooth $g,h$ satisfying \eqref{eq:core}
\begin{align}
\int g (- \cL_m h) & d\mu^{(m,2)}_\star = 
\int h (- \cL_m g) d\mu^{(m,2)}_\star
\nonumber \\
&= \frac{1}{2} 
\int \Big\{ \sum_{j=1}^{m} [(\partial_{z_{j-1}} - \partial_{z_{j}}) g] [(\partial_{z_{j-1}} 
       - \partial_{z_{j}}) h] \Big\}  
d\mu^{(m,2)}_\star  := \cD_m (g,h) \,.
\label{eq:sym-form}
\end{align}
We see that $d\mu^{(m,2)}_\star = p_m({\bf z}) d {\bf z}$ is reversible 
for this dynamic, and the corresponding Dirichlet form 
$\cD_m (h) := \cD_m (h,h)$, extends from $\cC_m$, now only as
\begin{equation}
  \label{eq:2}
 \cD_m(h) = \frac 12 \int
  \Big\{\sum_{j=1}^{m}\left[\left(\partial_{z_{j-1}} 
      - \partial_{z_{j}} \right) h\right]^2 \Big\} 
%   d \mu^{(m,2)}_\star 
p_m ({\bf z}) d {\bf z} \,,
\end{equation}
to the Sobolev space $W^{1,2}(\mu_\star^{(2,m)})$ 
of functions on $\bR^{m-1}_+$ with $L^2(\mu_\star^{(2,m)})$-derivatives.

We also consider the 
Markov dynamics on $\bR_+^{m-1}$ for the spacing 
process of an \abbr{Atlas}$_m$ whose $m$-th particle $X_m(s) = X_m(0)$
is frozen. Under the same convention as before,
% that $\partial_{z_0} \equiv 0$
the generator for this, right-anchored dynamics, is    
\begin{equation}
  \label{eq:genfinM}
   \widetilde \cL_m = \frac 12 \sum_{j=1}^{m-1} \left(\partial_{z_{j-1}} - \partial_{z_{j}} \right)^2  - \partial_{z_{1}}  
\end{equation}
for the core $\widetilde \cC_m$ of local, smooth functions $h({\bf z})$ 
such that  
\begin{equation}
  ( \widetilde\Delta^{(m)}\partial_{z_{j}}) h \big|_{z_j = 0} = 0, 
  \qquad j=1,\ldots ,m-1\,, 
\label{eq:core2}
\end{equation}
where $\widetilde\Delta^{(m)}$ is the discrete Laplacian with 
mixed boundary conditions. 
%Dirichlet at j=0 and Neuman at j=m
Specifically,
\begin{equation}\label{eq:29b}
    \widetilde\Delta^{(m)} \partial_{z_j} = \partial_{z_{j-1}} -
    2 \partial_{z_j} + \partial_{z_{j+1}}\,,
     \;\; 1 \le j \le m-2 \,, \quad
    \widetilde\Delta^{(m)} \partial_{z_{m-1}} = 
    \partial_{z_{m-2}} - \partial_{z_{m-1}} 
\end{equation}

For the remainder of this section we identify 
$\mu_\star^{(\infty,2)}$ with its marginal on 
${\bf z} = (z_1,\ldots,z_{m-1})$, whose density on $\bR_+^{m-1}$ is  
\begin{equation}\label{dfn:qstar}
q_m ({\bf z}) := \prod_{j=1}^{m-1} 2 e^{-2 z_j} \,.
\end{equation} 
Analogously to \eqref{eq:15} we find that 
\begin{equation}\label{eq:15-modified}
\widetilde \cL_m = - \frac{1}{2q_m} 
 \sum_{j=1}^{m-1} \partial_{z_j} \big(q_m \widetilde\Delta^{(m)}  \partial_{z_{j}} \big)  \,.
\end{equation}
%Consequently, 
This (marginal of) $\mu^{(\infty,2)}_\star$ is 
%further 
{thus reversible (stationary} 
and ergodic) for the right-anchored dynamics, and similarly to \eqref{eq:sym-form}--\eqref{eq:2} for 
bounded, smooth
$h$ satisfying \eqref{eq:core2}, the
associated Dirichlet form is given 
(on $W^{1,2}(q_m d{\mathbf z})$) by
\begin{equation}
  \label{eq:2M}
  \widetilde \cD_m (h) = \frac 12 \int \Big\{ 
  \sum_{j=1}^{m-1} \big[(\partial_{z_{j-1}}  - \partial_{z_{j}}) h\big]^2 
    \Big\} q_m({\bf z}) d{\bf z}       
%      d\mu_\star^{(\infty,2)} 
      \,.
\end{equation}
Indeed, this reversible measure corresponds to starting the right-anchored
dynamics with $X_1(0)=0$ and 
a Gamma($2,m-1$) law for the frozen $X_m$, with the 
remainder $m-2$ initial particle positions chosen independently and 
uniformly on $[0,X_m]$.

\smallskip
\begin{proof}[Proof of Proposition \ref{prop:finite}]
Fixing $m \ge 2$, we 
start the finite particle dynamics of generator 
$\widetilde \cL_m$ of \eqref{eq:15}, with initial law 
$\nu_0^{(m)}$ on $\bR_+^{m-1}$ whose density  
\begin{equation}
  \label{eq:24}
f_0 := \frac{d\nu_0^{(m)}}{d\mu^{(m,2)}_\star} \,,    
\end{equation}
has the finite entropy 
\begin{equation}
H(\nu_0^{(m)} |\mu_\star^{(m,2)}) 
=  \int [f_0 \log f_0] ({\bf z}) p_m ({\bf z}) d {\bf z} =: H_m(f_0) \,.
\label{eq:23}
 \end{equation}
Recall \cite{amb-sav} that a 
Wasserstein solution of the Fokker-Planck equation
\begin{equation}\label{eq:ss}
     \partial_t f_t =  \cL_m f_t  \,,     
\end{equation}
starting at $f_0$, is a continuous (in the topology of weak convergence) collection of probability measures 
$t \mapsto f_t \mu_\star^{(2,m)}$ such that 
for any $s$, the  
derivatives $(\partial_{z_{j-1}} - \partial_{z_j}) \sqrt{f_s}$
exist a.e. in $\bR_+^{m-1}$, with
\begin{equation}\label{eq:fin-df}
\int_0^t \cD_m(\sqrt{f_s}) ds <\infty \qquad \forall t<\infty 
\end{equation} 
and moreover 
for any fixed compactly supported
smooth function $\zeta(t,{\bf z})$ on $\bR_+ \times \bR_+^{m-1}$,
 \begin{equation}
    \label{eq:weak-fk}
    \int_0^\infty 
    \left\{
\cD_m(f_t,\zeta(t, \cdot))    
    - \int \partial_t \zeta(t,{\bf z}) f_t({\bf z}) p_m ({\bf z}) d{\bf z}  
    \right\} dt \ = \ 0 \,.
  \end{equation}
{By \cite[Theorem 6.6]{amb-sav}, the law $\nu_t^{(m)}$ that corresponds to a starting measure $\nu_0^{(m)}$
of finite entropy and finite second moment, is a
% the unique, 
Wasserstein solution $\nu^{(m)}_t = f_t \mu_\star^{(m,2)}$ 
of \eqref{eq:ss}.}\footnote{Though 
we could not find a reference for it, we
expect $f_t$ to be also a strong solution of \eqref{eq:ss}
which satisfies the boundary conditions of \eqref{eq:core}.}
From \cite[Theorem 6.6]{amb-sav} we further have that then 
$\sqrt{f_t} \in W^{1,2}(\mu_\star^{(2,m)})$ and
\begin{equation}
  \label{eq:7}
  \cD_m \left(\sqrt{f_t} \right) < \infty \qquad \forall t>0\,,
\end{equation}
with $t \mapsto  \cD_m \left(\sqrt{f_t} \right)$ non-increasing and
\begin{equation}
  \label{eq:6}
  H_m(f_t) - H_m(f_0) = - 4 \int_0^t \cD_m \left(\sqrt{f_s} \right) \; ds \,.
\end{equation}
Consequently, 
for any $t \ge 0$,
\begin{equation}
 4 t \,  {\cD}_m \left(\sqrt{f_t}\right) \le
 4 \int_0^{t} \cD_m\left(\sqrt{f_s}\right) ds =
  H_m(f_0) - H_m(f_t)  \le H_m(f_0)  \,.
\label{eq:27}
\end{equation}
Next, comparing \eqref{eq:1} with \eqref{dfn:qstar}, notice that
$q_m=p_m h_m$ for the strictly positive
\begin{equation*}
  h_m ({\bf z}) :=
  \prod_{j=1}^{m-1} \frac{2}{\alpha_j} e^{-\frac{2j}{m} z_j} \,,
\end{equation*}
such that 
\begin{equation}
  (\partial_{z_{j-1}} - \partial_{z_{j}}) \sqrt{h_m} =  
  \big(\frac 1m - {\bf 1}_{\{j=m\}} \big) \sqrt{h_m} \,.  
\end{equation}
Hence, for $f_t=h_m g_t$, using that $\sum_{j=1}^{m}
\left(\partial_{z_{j-1}} - \partial_{z_{j}}\right) =0$ and 
$\int g_t q_m d {\bf z} =1$, we arrive at  
\begin{align}
 2  \cD_m(\sqrt{f_t}) 
 % & = \int \Big\{ \sum_{j=1}^{m} 
% \left[\left(\partial_{z_{j-1}} - \partial_{z_{j}}\right) 
% \sqrt{f_{t}}\right]^2 \Big\} p_m (\mathbf z) d\mathbf z \nonumber \\
= & \int \Big\{ \sum_{j=1}^{m} \Big[\left(\partial_{z_{j-1}} 
- \partial_{z_{j}}\right) \sqrt{g_{t}} + \big(\frac{1}{m} - {\bf 1}_{\{j=m\}}
\big) \sqrt{g_{t}} 
\Big]^2  
\Big\} q_{m}  (\mathbf z) d{\mathbf z} \, \nonumber \\
= & 2 \widetilde \cD_m (\sqrt{g_t}) - \frac 1m + 
\int \big[\sqrt{g_t} - \partial_{z_{m-1}} \sqrt{g_t}\big]^2 q_m(\mathbf z) d{\mathbf z}
% \ge 2 \widetilde \cD_m(\sqrt{g_t}) - \frac 1m 
\,.
\label{eq:bdDF}
\end{align}
Combining \eqref{eq:27} and \eqref{eq:bdDF} we see that for any $t > 0$,
\begin{equation}
  \label{eq:25}
2 \widetilde \cD_{m} (\sqrt{g_{t}}) 
\le \frac{1}{2t}  H_m(f_0) + \frac 1m \,,
\end{equation}
where $g_{t}$ is precisely the density of $\nu^{(m)}_t$ 
with respect to the marginal of $\mu^{(\infty,2)}_\star$. 
In view of the definitions \eqref{eq:2M} and \eqref{eq:23}, 
the preceding bound
matches our claim \eqref{eq:25n}.
\end{proof}

With $\widetilde{D}_{k+1}(\sqrt{g})$ invariant to
mass-shifts $g({\bf z}) \mapsto \exp(2 \sum_{j=1}^{k} \theta_j) 
g({\bf z} - {\boldsymbol \theta})$, having 
$\widetilde{D}_{k+1}(\sqrt{g_m}) \to 0$ does not imply (uniform) 
tightness of the collection 
of probability measures
$\{ g_m q_{k+1} d {\bf z} \}_{m \in \bN}$. Instead, when proving 
Corollary \ref{cor:fdd}, we rely for tightness on the following
direct consequence of \cite[Sec. 3]{ips}.
\begin{lem}\label{lem:Atlask-tight}
For \abbr{atlas}$_{k+1}$, some $c_1=c_1(k)$ finite and
$D(t):=\sum_{j=1}^k Z_j(t)$,
\begin{equation}\label{eq:unif-tight}
\lim_{x \to \infty} \,
\sup_{t \ge c_1 D(0)} \, \{ \, \bP (D(t) \ge x) \, \} = 0 \,,
\end{equation}
where the supremum is also over all initial configurations.
\end{lem}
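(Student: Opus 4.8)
The plan is to establish the uniform tightness estimate \eqref{eq:unif-tight} for $D(t)=\sum_{j=1}^k Z_j(t)$ in \abbr{atlas}$_{k+1}$ by constructing a Lyapunov function $V(\mathbf{z})$ on $\bR_+^k$ for the spacing generator $\cL_{k+1}$ of \eqref{eq:genfin} which dominates $D(\mathbf z)$ (say $V(\mathbf z) \ge c\, D(\mathbf z)$ for some $c>0$) and satisfies a drift inequality of the form $\cL_{k+1} V \le -\kappa + C\,{\bf 1}_{\{V \le R\}}$ for finite constants $\kappa,C,R$ depending only on $k$. Such $V$ (essentially a suitably weighted sum $\sum_j c_j z_j$ plus lower-order corrections, or one of the explicit Lyapunov functions from \cite{ips} or \cite{dw}) exists precisely because \abbr{atlas}$_{k+1}$ is positive recurrent with exponentially light stationary tails $\mu_\star^{(k+1,2)}$; this is where I invoke \cite[Sec. 3]{ips}.

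The key steps, in order, are as follows. First I would record the generator action: since $\cL_{k+1}$ acts on $z_j$ via the drift $(\gamma_{j+1}-\gamma_j)\,dt = -{\bf 1}_{\{j=1\}}\,dt$ plus the reflection/diffusion terms, a linear test function $\sum_j c_j z_j$ has $\cL_{k+1}$-image equal to a negative constant (from the $-\partial_{z_1}$ term) plus boundary local-time contributions that must be killed by the Neumann-type conditions \eqref{eq:core} — so the actual Lyapunov function needs the quadratic/correction terms from \cite{ips} to genuinely satisfy the boundary conditions while retaining the negative drift in the bulk. Second, given the drift inequality $\cL_{k+1} V \le -\kappa$ outside a compact set $K=\{V\le R\}$, I would apply Dynkin's formula to $V(\underline Z(t))$ (justified by a standard localization/stopping-time argument using that $V$ grows at most polynomially and the process has finite moments), obtaining $\bE_{\mathbf z_0} V(\underline Z(t)) \le V(\mathbf z_0) - \kappa t + Ct$, more precisely $\bE_{\mathbf z_0} V(\underline Z(t)) \le V(\mathbf z_0) + (C - \kappa)t + (\text{excursion into } K \text{ corrections})$. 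Third, the cleanest route to the \emph{uniform-over-initial-condition} statement is to run the argument until $V$ first drops below $R$: for $t \ge c_1 D(0) \ge c_1 V(\mathbf z_0)/\sup c_j$ the linear term $-\kappa t$ overwhelms $V(\mathbf z_0)$, forcing $\bP_{\mathbf z_0}(\tau_K > t) \to 0$ uniformly, after which on the event $\{\tau_K \le t\}$ one controls $D(t)$ by the strong Markov property and a uniform (over $K$, hence over a compact set) tail bound $\sup_{\mathbf z \in K} \bP_{\mathbf z}(D(s) \ge x) \to 0$ as $x\to\infty$ uniformly in $s$, the latter itself following from $\bE_{\mathbf z} V(\underline Z(s)) \le R + (C-\kappa)^+ s$ being bounded on compacts only for bounded $s$ — so one actually wants the sharper supermartingale form $\bE_{\mathbf z}V(\underline Z(s)) \le \max(V(\mathbf z), R')$ valid for all $s$, which the \cite{ips} Lyapunov function does provide. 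Putting these together, $\sup_{t \ge c_1 D(0)} \bP_{\mathbf z_0}(D(t) \ge x) \le \sup_{t\ge c_1 D(0)} \bP_{\mathbf z_0}(V(\underline Z(t)) \ge cx)$ and the right side tends to $0$ as $x\to\infty$ uniformly in $\mathbf z_0$ by Markov's inequality applied to the uniform-in-$t$ moment bound.

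The main obstacle I anticipate is \emph{not} the existence of the Lyapunov function — that is cited — but rather extracting the \emph{uniformity in the initial configuration} together with uniformity over $t \ge c_1 D(0)$ simultaneously. The subtlety is that for large $\|\mathbf z_0\|$ the process needs time of order $D(0)$ to relax, so the supremum over small $t$ must be excluded (hence the $t \ge c_1 D(0)$ restriction), and for $t$ just past this threshold one needs that $V$ has been contracted to a bounded level with high probability, uniformly; this requires a drift estimate of the form $\bE_{\mathbf z_0}[V(\underline Z(t))] \le V(\mathbf z_0)e^{-\delta t} + C'$ (geometric, not merely linear, ergodicity) or the stopping-at-$\tau_K$ trick above, and verifying the hypotheses of whichever ergodic theorem one uses for this degenerate \abbr{rbm} in a polyhedral domain is the delicate point. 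The remaining ingredients — Dynkin's formula, Markov's inequality, the strong Markov decomposition at $\tau_K$ — are routine once the drift inequality and the uniform moment bound are in hand.
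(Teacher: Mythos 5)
Your overall strategy (a Lyapunov function from \cite{dw}/\cite{ips} plus Markov's inequality, with the restriction $t \ge c_1 D(0)$ absorbing the initial condition) is the same as the paper's, but your primary route has a genuine gap exactly at the point you flag as delicate: uniformity over the initial configuration. With a linear-growth $V\asymp D$ and the additive drift condition $\cL_{k+1}V \le -\kappa + C\,{\bf 1}_{\{V\le R\}}$, the decomposition at $\tau_K$ only yields $\bP_{\mathbf z_0}(\tau_K>t)\le V(\mathbf z_0)/(\kappa t)\le c/(\kappa c_1)$ for $t\ge c_1 D(0)$ --- a fixed constant, independent of $x$ --- so the resulting bound on $\bP(D(t)\ge x)$ cannot tend to $0$ as $x\to\infty$. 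The sharper estimates you invoke to repair this are not available for a linear-growth $V$: a constant inward drift cannot give the geometric inequality $\bE_{\mathbf z_0}[V(Z(t))]\le e^{-\delta t}V(\mathbf z_0)+C'$ when $V$ is comparable to $D$, nor the uniform-in-$s$ bound $\bE_{\mathbf z}[V(Z(s))]\le\max(V(\mathbf z),R')$. What \cite{ips} actually supplies (their inequality (51), built on the \cite{dw} construction) is the geometric estimate for the \emph{exponential} function $V(\mathbf z)=e^{\langle\mathbf v,\mathbf z\rangle}$ with $\mathbf v>0$ strictly positive, namely $\bE[V(Z(t))]\le e^{-t}V(Z(0))+c_2$ for all $t\ge 0$.

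Once you use that exponential estimate, the stopping-time/strong-Markov scaffolding becomes unnecessary, and this is precisely the paper's proof: taking $c_1:=\max_j\{v_j\vee v_j^{-1}\}$ gives $\langle\mathbf v,\mathbf Z(t)\rangle/D(t)\in[c_1^{-1},c_1]$, so Markov's inequality applied directly at time $t$ yields $\bP(D(t)\ge x)\le \bP(\langle\mathbf v,\mathbf Z(t)\rangle\ge x/c_1)\le e^{-x/c_1}\big(e^{-t}e^{c_1 D(0)}+c_2\big)$, and for $t\ge c_1 D(0)$ the bracket is at most $1+c_2$, uniformly over all initial configurations; letting $x\to\infty$ finishes the lemma. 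The exponential decay in $t$ exactly cancels the exponential of the initial condition --- this is the mechanism your linear-$V$ plan lacks. If you keep your outline, replace the linear Lyapunov function by the exponential one and drop the $\tau_K$ argument entirely.
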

\begin{proof} Building on the construction in \cite[Sec. 3]{dw} 
of Lyapunov functions for \abbr{RBM} in polyhedral domains, 
while proving \cite[Thm. 3]{ips} the authors show that for 
the \abbr{Atlas}$_{k+1}$ (and more generally, for the 
spacing associated with \eqref{mainSDE}, whenever
$j \mapsto \gamma_j$ is non-increasing 
and $j \mapsto \sigma_j^2$ forms an arithmetic progression), one has
\begin{equation}\label{eq:lyapunov}
\bE [ V({\bf Z}(t))] \le e^{-t} [V({\bf Z}(0))] + c_2 \,, \qquad \forall t \ge 0 \,,
\end{equation}
where $V({\bf z})=e^{\langle {\bf v}, {\bf z} \rangle}$ for 
some strictly positive ${\bf v}$ and  
% constant 
$c_2 < \infty$ (see \cite[inequality (51)]{ips}).
Noting that $\langle {\bf v},{\bf Z}(t) \rangle/D(t) \in [c_1^{-1},c_1]$
(with $c_1 := \max_j \{v_j \vee v_j^{-1}\}$), we get upon  
combining \eqref{eq:lyapunov} with Markov's inequality that 
for any initial configuration ${\bf Z}(0)$,
\begin{equation}\label{eq:exp-tail}
\bP(D(t) \ge x) \le 
\bP(\langle {\bf v}, {\bf Z}(t) \rangle \ge x/c_1 ) 
\le e^{- x/c_1} [ e^{-t} e^{c_1 D(0)} + c_2 ] \,.
\end{equation}
For $t \ge c_1 D(0)$ the \abbr{rhs} of \eqref{eq:exp-tail}
is at most $e^{-x/c_1} (1+c_2)$, yielding
\eqref{eq:unif-tight}.
\end{proof}
\begin{proof}[Proof of Corollary \ref{cor:fdd}] 
Fix probability densities $h_0 \ne h_1$ 
\abbr{wrt} the law $q_m d {\bf z}$ on $\bR_+^{m-1}$,
such that $\sqrt{h_0}$, $\sqrt{h_1} \in W^{1,2}(q_m d{\bf z})$.
Both properties then apply for $h_\la := \la h_1 + (1-\la) h_0$, any
$\la \in (0,1)$, and it is not hard to verify that 
\[
\frac{d^2 \widetilde \cD_m(\sqrt{h_\la})
}{d^2 \la}
 = \int 
\Big\{ \sum_{j=1}^{m-1} \big[ 
\sqrt{\alpha_0} (\partial_{z_{j-1}}-\partial_{z_j}) \sqrt{h_1} 
-
\sqrt{\alpha_1} (\partial_{z_{j-1}}-\partial_{z_j}) \sqrt{h_0} 
\big]^2 \Big\} \alpha_0 \alpha_1 q_m d {\mathbf z} \,,
\]
where the non-negative $\alpha_0:=h_0/h_\la$, $\alpha_1:=h_1/h_\la$ are  
uniformly bounded (per $\la$). Consequently, 
the map  
$h \mapsto \widetilde \cD_m(\sqrt{h})$
is convex on the set of probability densities $h$ with 
respect to the product law $q_m d {\bf z}$ on $\bR_+^{m-1}$.

The marginal density on $(z_1,\ldots,z_k)$ (\abbr{wrt} the $k$-th 
marginal of $\mu^{(\infty,2)}_\star$), is given for 
$\nu^{(m)}_t (d{\bf z}) = g_t q_m d{\bf z}$ and $1 \le k < m$, by 
\begin{equation}
  \label{eq:8}
  g_{t,k} (z_1, \ldots,z_k) := \int g_t({\bf z})
 \prod_{j=k+1}^{m-1} 2 e^{-2 z_j} dz_j \,.
\end{equation}
Thus, by the convexity of $\widetilde \cD_{m}(\sqrt{\cdot})$ and the formula
\eqref{eq:2M}, we have that 
\begin{align}
\widetilde \cD_m(\sqrt{g_t}) & = \int \widetilde \cD_m(\sqrt{g_t}) 
\prod_{j=k+1}^{m-1} 2 e^{-2 z_j} dz_j
\ge \widetilde \cD_m (\sqrt{g_{t,k}}) \ge \widetilde 
\cD_{k+1} (\sqrt{g_{t,k}}) \,.
\label{eq:convex-df}
\end{align}
In particular, fixing $k \ge 1$ and choosing $t_m \to \infty$
as in the statement of the corollary, we deduce from 
\eqref{eq:25n} and \eqref{eq:convex-df} that 
\begin{equation}\label{eq:app}
 \lim_{m\to\infty} \widetilde \cD_{k+1} \left(\sqrt{g_{t_m,k}}\right) = 0 \,.
\end{equation}
For $r \ge 2$ and the Markov generator $\widetilde \cL_r$ 
of \eqref{eq:genfinM} consider the functional 
on the collection of probability measures $\nu$ on $\bR_+^{r-1}$ 
defined by
\begin{equation}\label{eq:Ir}
\widetilde I_r (\nu) := \sup_{h \gg 0} \Big\{ \int
h^{-1} ( - \widetilde \cL_r h ) \, d \nu \Big\}\,,
\end{equation}
where the supremum is taken over all bounded 
away from zero, twice 
continuously differentiable functions having the 
boundary conditions \eqref{eq:core2} at $m=r$. With 
$h^{-1} (-\widetilde \cL_r h)$ then continuous and bounded,
clearly $\widetilde I_r(\cdot)$ is l.s.c. in the 
weak topology on probability measures in $\bR_+^{r-1}$. Further,
recall from \cite[Thm. 5]{DV1} that $\widetilde I_r(\nu) = \infty$ 
unless $\nu = g q_r d {\bf z}$ for a probability density $g$ such that 
$\sqrt{g} \in W^{1,2}(q_r d{\mathbf z})$, or equivalently
$\widetilde \cD_r(\sqrt{g})<\infty$, 
in which case $\widetilde I_r(\nu) = \widetilde \cD_r(\sqrt{g})$.
Hence, \eqref{eq:app} amounts to $\widetilde 
I_{k+1} (\nu^{(m,k)}_{t_m} ) \to 0$
for the joint law $\nu^{(m,k)}_{t_m}$ of $(Z_1(t_m),\ldots,Z_k(t_m))$
and any weak limit point of these laws must 
have a density $g$ \abbr{wrt} $q_{k+1} d {\bf z}$ 
such that $\widetilde \cD_{k+1} (\sqrt{g}) = 0$. From \eqref{eq:2M} it is thus 
necessarily that throughout $\bR^k_+$, 
\begin{equation*}
  \partial_{z_1} \sqrt{g} = 0, \qquad \left(\partial_{z_{j-1}}
     - \partial_{z_{j}} \right) \sqrt{g} = 0,\quad  j=2,\ldots,k\,,
\end{equation*}
so as claimed, the only possible limit point is $g \equiv 1$. 
By Prohorov's theorem, it remains to verify that
$\{ \nu_{t_m}^{(m,k)} \}$ are uniformly tight, namely, to provide
a uniform in $m$ tail-decay for $\sum_{j=1}^k Z_j(t_m)$ in the
corresponding \abbr{Atlas}$_m$ system. To this 
end, recall \cite[Cor. 3.10(ii)]{sar2} that under the same 
driving Brownian motions $\{B_j(s)\}$ and initial configuration,
% $\underline{X}(0)$, 
the first $k$ spacings increase when 
all particles to the right of $X_{k+1}(0)$ are removed. 
Consequently, it suffices to provide a uniform in $m$ tail decay
for the diameter $D(t_m)$ of an \abbr{Atlas}$_{k+1}$ system 
of initial spacing distribution $\nu_0^{(m,k)}$. 
Fixing $\epsilon > 0$ we have from \eqref{eq:unif-tight} the 
existence of finite $c_1=c_1(k)$ and $x=x(\epsilon)$ such that 
for a given initial configuration, if $t \ge c_1 D(0)$ then 
$\bP(D(t) \ge x) \le \epsilon$. By our assumption that 
$t_m^{-1} \rho_m \to 0$ in $\nu^{(m,k)}_0$-probability, 
for the (random) initial diameter $\rho_m := X_{k+1}(0)-X_1(0)$, 
we have that 
$\bP(c_1 \rho_m \ge t_m) \le \epsilon$ for all $m \ge m_0(\epsilon)$,
in which case
\[
\bP(D(t_m) \ge x) \le 
\bP(D(t_m) \ge x, t_m \ge c_1 \rho_m) 
+ \bP(c_1 \rho_m \ge t_m) \le 2 \epsilon \,.
\]
With $\epsilon>0$ arbitrarily small and $x=x(\epsilon)$  
independent of $m$, we have thus established the required uniform tightness.
\end{proof}
\begin{rem} The proof of Proposition \ref{prop:finite} 
is easily adapted to deal with the right-anchored dynamic 
(of the generator $\widetilde \cL_m$ given in \eqref{eq:genfinM}). 
It yields for the latter dynamic the bound of \eqref{eq:25n}, now
with $(2t)^{-1} H(\nu_0^{(m)} | \otimes_{k=1}^{m-1} {\bf Exp}(2))$
in the \abbr{rhs}.
The proof of Lemma \ref{lem:Atlask-tight}  
also adapts to right-anchored dynamics,
hence the conclusion of Corollary \ref{cor:fdd} 
applies for sequences of right-anchored dynamics
when the latter expression decays to zero at $t=t_m \to \infty$
such that $t_m^{-1} \sum_{j=1}^k Z_j(0) \to 0$.
\end{rem}

\section{Coupling to \texorpdfstring{\abbr{Atlas}$_\infty$}{Atlas}: Proof of Theorem \ref{thm:main}}
\label{sec:coupl-arbitr-inti}

Let $\mathcal G(a) = (2\pi)^{-1/2} \int_a^{\infty} e^{-x^2/2} dx$
and consider the \abbr{Atlas}$_\infty$ process 
$\underline{Y}(t)=\{Y_i(t)\}$, denoting by  
$\underline{X}(t) = \{X_j(t)\}$ the corresponding 
ranked configuration. We first provide three elementary 
bounds for this process that are key to the proof of Theorem \ref{thm:main}.
\begin{lem}
  For any initial condition $\underline{X}(0)$, $\ell \ge 1$ and
  $t, \Gamma > 0$, 
  \begin{equation}
    \label{eq:3}
    \mathbb P\Big( \sup_{s \in [0,t]} \{X_1(s)\} \ge \Gamma \Big) \le
    2 \mathcal G\Big(\frac{\ell \, \Gamma  - t - \sum_{j=1}^{\ell}
        X_j(0)}{\sqrt{\ell t}}\Big) \,. 
  \end{equation}
  \begin{proof} Starting \abbr{wlog} 
at $\underline{Y}(0)=\underline{X}(0)$, we have that for any $s \ge 0$, 
    \begin{equation*}
      X_1(s) 
      % \le \frac 1\ell \sum_{j=1}^{\ell} X_j(s) 
      \le 
      \frac 1\ell \sum_{i=1}^{\ell}  Y_i(s) \le   
      \frac {s}\ell + \frac
      1\ell \sum_{j=1}^{\ell} X_j(0) + \frac{1}{\sqrt{\ell}} {\widetilde W} (s) \,,
\end{equation*}
where ${\widetilde W}(s) := \ell^{-1/2} \sum_{i=1}^{\ell} W_i(s)$ is  
a standard Brownian motion. Thus, by the reflection principle, 
\begin{align*}
    \mathbb P\Big( \sup_{s \in [0,t]} \{X_1(s)\} \ge \Gamma \Big) &\le
    \mathbb P\Big( \sup_{s \in [0,t]} \{\frac {1}{\sqrt{\ell}}
      \widetilde W(s) \} \ge \Gamma - \frac {t}\ell - \frac 1\ell 
      \sum_{j=1}^{\ell}
      X_j(0) \Big)\\
    &= 2 \mathbb P\Big( \widetilde W(t) \ge {\sqrt \ell} \, \Gamma  - \frac
      {t}{\sqrt{\ell}}  - \frac 1{\sqrt \ell} \sum_{j=1}^{\ell}
      X_j(0) \Big) \,,
\end{align*}
which upon Brownian scaling yields the stated bound of \eqref{eq:3}.
%     &= 2 \mathbb P\Big( \widetilde W(1) \ge 
%       \frac{\Gamma\sqrt{\ell}}{\sqrt t} - \frac 
%      {\sqrt t}{\sqrt{\ell}}  - \frac 1{\sqrt {\ell t}} \sum_{j=1}^{\ell}
%      X_j(0) \Big) 
  \end{proof}
\end{lem}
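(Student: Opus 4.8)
The plan is to dominate the running maximum of the leftmost particle $X_1$ by the empirical average of the first $\ell$ labelled particles of $\underline Y$, and then to reduce the estimate to a Gaussian tail bound via the reflection principle. Starting (without loss of generality) from the ranked configuration $\underline Y(0)=\underline X(0)$, so that $\sum_{i=1}^{\ell} Y_i(0)=\sum_{j=1}^{\ell} X_j(0)$, I would first record the deterministic inequality $X_1(s)=\min_i Y_i(s)\le \frac1\ell\sum_{i=1}^{\ell}Y_i(s)$, valid at every time $s$.

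The second step is to control $\sum_{i=1}^{\ell}Y_i(s)$ directly from the defining \abbr{sde} \eqref{mainSDE} with $\gamma_j=\mathbf{1}_{\{j=1\}}$ and $\sigma_j\equiv 1$. Summing over $i=1,\ldots,\ell$, the martingale part is $\sum_{i=1}^{\ell}W_i(s)=\sqrt\ell\,\widetilde W(s)$ with $\widetilde W(s):=\ell^{-1/2}\sum_{i=1}^{\ell}W_i(s)$ a standard Brownian motion, while the accumulated drift equals $\int_0^s\sum_{i=1}^{\ell}\mathbf{1}\{Y_i(u)=Y_{(1)}(u)\}\,du\le s$, because at a.e.\ time at most one particle is leftmost. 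This gives $X_1(s)\le \frac s\ell+\frac1\ell\sum_{j=1}^{\ell}X_j(0)+\frac1{\sqrt\ell}\widetilde W(s)$, uniformly in $s\ge 0$.

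The last step is to take the supremum over $s\in[0,t]$: bounding $s/\ell\le t/\ell$ and rearranging, the event $\{\sup_{s\in[0,t]}X_1(s)\ge\Gamma\}$ is contained in $\{\sup_{s\in[0,t]}\widetilde W(s)\ge a\}$ with $a:=\ell^{-1/2}(\ell\Gamma-t-\sum_{j=1}^{\ell}X_j(0))$. The reflection principle yields $\mathbb P(\sup_{s\in[0,t]}\widetilde W(s)\ge a)=2\mathbb P(\widetilde W(t)\ge a)=2\mathcal G(a/\sqrt t)$ for $a\ge 0$, and the inequality $\mathbb P(\sup_{s\in[0,t]}\widetilde W(s)\ge a)\le 2\mathcal G(a/\sqrt t)$ is trivially true for $a<0$ since then $2\mathcal G(a/\sqrt t)>1$; since $a/\sqrt t=(\ell\Gamma-t-\sum_{j=1}^{\ell}X_j(0))/\sqrt{\ell t}$, this is exactly \eqref{eq:3}.

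There is no genuine obstacle here; the only two points that want a moment's care are that the accumulated drift of $\sum_{i=1}^{\ell}Y_i$ stays bounded by $s$ whatever the identity of the bottom particle (so that no factor of $\ell$ enters through the drift), and that the reflection identity is invoked as an inequality valid for thresholds of either sign. I would expect the remaining bounds announced in the Lemma to follow from comparison-and-Gaussian-tail estimates of the same soft nature.
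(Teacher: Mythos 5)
Your proposal is correct and follows essentially the same route as the paper's proof: bounding $X_1(s)$ by the average of the first $\ell$ labelled particles, controlling the summed drift by $s$, and concluding via the reflection principle and Brownian scaling. The extra remarks you add (at most one particle carries the drift at a.e.\ time, and the bound is trivial when the threshold is negative) are just fine-grained justifications of steps the paper leaves implicit.
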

\begin{lem}\label{lem:top-control}
For $X_1(0) \ge 0$, $\Gamma$ and $k \ge 2$ such that
$\Gamma^{(k)} := (\Gamma - X_k(0))/3 > 0$, any
$\ell \ge 1$ and $t>0$, we have that  
  \begin{equation}
    \label{eq:18}
    \mathbb P\Big( \sup_{s \in [0,t]} \{X_k(s)\} \ge \Gamma \Big) \le
      2 \mathcal G\Big(\frac{\ell \, \Gamma^{(k)} -  t - \sum_{j=1}^{\ell}
        X_j(0)}{\sqrt{\ell t}}\Big) + 4 k \mathcal G
        \Big(\frac{\Gamma^{(k)}}{\sqrt t}\Big) \,.
  \end{equation}
\end{lem}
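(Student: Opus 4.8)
The plan is to split $\{\sup_{s\le t}X_k(s)\ge\Gamma\}$ according to whether or not the leftmost particle reaches the intermediate level $\Gamma^{(k)}$ during $[0,t]$. On the event $\{\sup_{s\le t}X_1(s)\ge\Gamma^{(k)}\}$ I will invoke the bound \eqref{eq:3} with $\Gamma$ replaced by $\Gamma^{(k)}>0$, which produces precisely the first term on the \abbr{rhs} of \eqref{eq:18}. It then remains to bound $\mathbb{P}(\{\sup_{s\le t}X_k(s)\ge\Gamma\}\cap F)$ by $4k\,\mathcal{G}(\Gamma^{(k)}/\sqrt t)$, where $F:=\{\sup_{s\le t}X_1(s)<\Gamma^{(k)}\}$.

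For this second piece, I would first record the elementary combinatorial inclusion $\{X_k(s)\ge\Gamma\}\subseteq\bigcup_{j=1}^k\{Y_j(s)\ge\Gamma\}$ (for, starting ranked, if all of $Y_1(s),\ldots,Y_k(s)$ were below $\Gamma$ then at least $k$ particles would lie below $\Gamma$ and so $X_k(s)<\Gamma$), which reduces matters, after intersecting with $F$ and a union bound, to showing $\mathbb{P}(\{\sup_{s\le t}Y_j(s)\ge\Gamma\}\cap F)\le 4\,\mathcal{G}(\Gamma^{(k)}/\sqrt t)$ for each fixed $j\le k$. The crux is that on $F$ the drift of $Y_j$ vanishes at every time $r$ with $Y_j(r)>\Gamma^{(k)}$, since then $Y_j(r)>X_1(r)$ so $Y_j$ is not the leftmost particle; hence above level $\Gamma^{(k)}$ the path $Y_j$ coincides with a pure Brownian motion. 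Running this bookkeeping from the last time before $s$ at which $Y_j$ is at or below $\Gamma^{(k)}$ (and, when there is no such time, from $0$, where $Y_j$ has carried no drift on $[0,s]$), I obtain on $F$ the pathwise estimate $Y_j(s)\le\max\{Y_j(0),\Gamma^{(k)}\}+\sup_{0\le u\le v\le s}(W_j(v)-W_j(u))$.

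Since the particles start ranked with $0\le X_1(0)$, we have $Y_j(0)=X_j(0)\le X_k(0)$, and as $\Gamma-X_k(0)=3\Gamma^{(k)}$ a short computation gives $\Gamma-\max\{Y_j(0),\Gamma^{(k)}\}\ge\min\{3\Gamma^{(k)},\,X_k(0)+2\Gamma^{(k)}\}\ge 2\Gamma^{(k)}$. Thus on $F$ the event $\{\sup_{s\le t}Y_j(s)\ge\Gamma\}$ forces the Brownian oscillation $\sup_{0\le u\le v\le t}(W_j(v)-W_j(u))$ to be at least $2\Gamma^{(k)}$, and since that oscillation is bounded by $\sup_{s\le t}W_j(s)+\sup_{s\le t}(-W_j(s))$, at least one of these two one-sided suprema must be $\ge\Gamma^{(k)}$. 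Each of the corresponding probabilities equals $2\mathcal{G}(\Gamma^{(k)}/\sqrt t)$ by the reflection principle, so a union bound over the two events and then over $j=1,\ldots,k$ yields the required $4k\,\mathcal{G}(\Gamma^{(k)}/\sqrt t)$, completing the proof.

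I expect the only genuinely delicate step to be the pathwise bound on $Y_j$ valid on $F$: one must be careful with the "no drift above $\Gamma^{(k)}$" bookkeeping, in particular with the edge case in which $Y_j$ starts above $\Gamma^{(k)}$ and never returns to that level, so that the relevant base point is $Y_j(0)$ rather than $\Gamma^{(k)}$. Everything else — the combinatorial inclusion, the oscillation-splitting, and the appeals to the reflection principle and to \eqref{eq:3} — is routine, as is the arithmetic with the three copies of $\Gamma^{(k)}$ that the factor $3$ in the definition of $\Gamma^{(k)}$ is designed to accommodate.
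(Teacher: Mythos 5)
Your proof is correct, but it takes a genuinely different route from the paper. The paper reduces the problem to a $k$-particle driftless (Harris) system via two stochastic comparison results of Sarantsev: monotonicity of the spacings in the drift $\gamma$ and the increase of the first $k-1$ spacings upon removing all particles to the right of $X_k(0)$; this gives the pathwise bound $X_k(s)\le X_1(s)+X_k(0)+\max_j B_j(s)+\max_j(-B_j(s))$, after which the budget $\Gamma-X_k(0)=3\Gamma^{(k)}$ is split into three events, one handled by \eqref{eq:3} and two by the reflection principle. You instead work directly with the labelled \abbr{sde} \eqref{mainSDE}: you condition on $F=\{\sup_{s\le t}X_1(s)<\Gamma^{(k)}\}$ (whose complement is exactly the first term of \eqref{eq:18} via \eqref{eq:3}), observe that on $F$ each $Y_j$ is driftless whenever it sits above $\Gamma^{(k)}$, and use a last-exit decomposition to get $Y_j(s)\le\max\{Y_j(0),\Gamma^{(k)}\}+\sup_{u\le v\le s}(W_j(v)-W_j(u))$ on $F$; combined with the inclusion $\{X_k(s)\ge\Gamma\}\subseteq\bigcup_{j\le k}\{Y_j(s)\ge\Gamma\}$ and the arithmetic $\Gamma-\max\{X_j(0),\Gamma^{(k)}\}\ge 2\Gamma^{(k)}$ (here you do need $X_k(0)\ge X_1(0)\ge 0$, which you use), the oscillation split and reflection principle give exactly $4k\,\mathcal G(\Gamma^{(k)}/\sqrt t)$. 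The constants match \eqref{eq:18} exactly. What each approach buys: the paper's argument is shorter given the cited comparison lemmas (which it reuses elsewhere, e.g.\ in the proof of Corollary \ref{cor:fdd}), while yours is self-contained, requiring no coupling or comparison machinery, at the price of the slightly delicate pathwise bookkeeping at the non-stopping last-exit time (whose edge case, starting above $\Gamma^{(k)}$ and never returning, you correctly handle); since that step is purely pathwise, no Markov-type argument is needed, so the gap you flag as "delicate" is indeed fine as you wrote it.
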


\begin{proof} Recall \cite[Cor. 3.12(ii)]{sar2} that keeping the same driving Brownian motions $\{B_j(s)\}$ and initial configuration
$\underline{X}(0)$, the spacing vector 
$\underline{Z}(t)$ is pointwise decreasing in $\gamma$. Further, by \cite[Cor. 3.10(ii)]{sar2}, the first $k-1$ spacings increase when all particles to
the right of
$X_{k}(0)$ are removed. Consequently, that value of $X_k(t)-X_1(t)$ at the drift $\gamma=1$ of \abbr{Atlas}$_\infty$ is bounded
by its value for a $k$-particle Harris system (of $\gamma=0$), starting 
at same positions as the original \abbr{Atlas}$_\infty$ 
process left-most $k$ particles. In the latter
case, letting $V_k(s) :=\max_{j=1}^k \{ B_j(s) \}$
and the identically distributed 
$V'_k(s):=\max_{j=1}^k \{-B_j(s)\}$, 
our assumption that $X_1(0) \ge 0$ results with
\begin{equation*}
X_k(s) - X_1(s) \le 
% X_k(0) - X_1(0) + \max_{j=1}^k B_j(s) - \min_{j=1}^k B_j(s) =  
 X_k(0) + V_k(s) + V'_k(s) \,. 
\end{equation*}
Thus,  
with $\Gamma^{(k)} = (\Gamma - X_k(0))/3$ and $\{\widetilde B(s)\}$
denoting a standard Brownian motion, we get by the union bound that 
\begin{align*}
     \mathbb P\Big( \sup_{s \in [0,t]} \{X_k(s)\} \ge \Gamma \Big) &\le
    \mathbb P\Big( \sup_{s \in [0,t]} \{X_1(s) +  V_k(s) + V'_k(s)\}    
     \ge \Gamma - X_k(0)  \Big)  \\
    & \le \mathbb P\Big( \sup_{s \in [0,t]} \{X_1(s)\} \ge \Gamma^{(k)} \Big)  
    + 2 k \mathbb P\Big(  \sup_{s \in [0,t]} \{ \widetilde B(s)\}
        \ge \Gamma^{(k)} \Big) \,.
\end{align*}
Consequently, by \eqref{eq:3} and the reflection principle, 
\begin{equation*}
     \mathbb P\Big( \sup_{s \in [0,t]} \{X_k(s)\} \ge \Gamma \Big) \le
     2 \mathcal G\Big(\frac{\ell \, \Gamma^{(k)} - t - \sum_{j=1}^{\ell}
        X_j(0)}{\sqrt{\ell t}}\Big) + 4k \mathcal G \Big(
        \frac{\Gamma^{(k)}}{\sqrt{t}} \Big) \,,
\end{equation*}
as claimed.
\end{proof}

\begin{lem}\label{lem:bulk-control}
For any $m \ge 0$, $t,\Gamma >0$ and initial configuration 
$\underline{Y}(0)=\underline{X}(0)$,
  \begin{equation}
    \label{eq:19}
    \mathbb P\Big( \inf_{s \in [0,t]}\; \inf_{i > m} \{Y_i(s)\} \le \Gamma
    \Big) \le 2 \sum_{i>m} \mathcal G\Big(\frac{ X_i(0)-
        \Gamma}{\sqrt t} \Big) \,. 
  \end{equation}
\end{lem}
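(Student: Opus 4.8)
The statement bounds the probability that some particle indexed beyond $m$ ever dips below level $\Gamma$ during $[0,t]$. The natural strategy is to dominate each $Y_i(s)$ from below by a driftless Brownian motion and then take a union bound over $i > m$. Concretely, starting (\abbr{wlog}) at $\underline Y(0) = \underline X(0)$, note that in the \abbr{Atlas}$_\infty$ dynamics \eqref{mainSDE} the only particle receiving drift is the current leftmost one, and that drift is positive (to the right). Hence for each fixed label $i$ the process $Y_i(s)$ has nonnegative drift, so $Y_i(s) \ge X_i(0) + W_i(s)$ where $W_i$ is the Brownian motion driving $Y_i$. More carefully, $Y_i(s) - X_i(0) = \int_0^s (\text{drift})\,du + W_i(s)$ with the integrand $\ge 0$, so $\inf_{s\le t} Y_i(s) \ge X_i(0) + \inf_{s\le t} W_i(s)$.

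From here the plan is: apply the union bound
\[
\mathbb P\Big(\inf_{s\in[0,t]}\,\inf_{i>m}\{Y_i(s)\} \le \Gamma\Big)
\le \sum_{i>m} \mathbb P\Big(\inf_{s\in[0,t]}\{Y_i(s)\} \le \Gamma\Big)
\le \sum_{i>m}\mathbb P\Big(\inf_{s\in[0,t]}\{W_i(s)\} \le \Gamma - X_i(0)\Big).
\]
Each summand is then controlled by the reflection principle: for a standard Brownian motion $W$ and level $a = X_i(0)-\Gamma > 0$ (terms with $X_i(0)\le \Gamma$ can simply be bounded by $1$, though in the regime of interest $X_i(0)$ is large), one has $\mathbb P(\inf_{s\le t} W(s) \le -a) = \mathbb P(\sup_{s\le t} W(s) \ge a) = 2\mathcal G(a/\sqrt t)$. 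Summing over $i>m$ yields exactly the right-hand side $2\sum_{i>m}\mathcal G((X_i(0)-\Gamma)/\sqrt t)$, completing the proof. (When $m=0$ the sum is over all $i\ge 1$, and the ranked configuration satisfying \eqref{maincond1} guarantees the series converges.)

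The only genuinely substantive point — and hence the main thing to get right — is the lower bound $Y_i(s)\ge X_i(0)+W_i(s)$, i.e. that the drift accumulated by the fixed-label particle $Y_i$ is pathwise nonnegative. This is immediate from \eqref{mainSDE} since $\sum_{j\ge1}\gamma_j \mathbf 1_{\{Y_i(t)=Y_{(j)}(t)\}} = \gamma\,\mathbf 1_{\{Y_i(t)=Y_{(1)}(t)\}} \ge 0$ for \abbr{Atlas}$_\infty$, with $\sigma_j\equiv 1$ so the martingale part of $Y_i$ is exactly $W_i$. No reflection-map or comparison machinery is needed here, in contrast to Lemmas 2.4–2.5; the remaining steps (union bound, reflection principle, Brownian scaling) are entirely routine.
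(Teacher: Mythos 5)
Your proposal is correct and follows essentially the same route as the paper: discard the (nonnegative) drift to dominate each labeled particle from below by $X_i(0)+W_i(s)$, then apply a union bound over $i>m$ and the reflection principle to get $2\,\mathcal G\big((X_i(0)-\Gamma)/\sqrt t\big)$ per term. Your pathwise justification of the drift-removal step (the drift integrand $\gamma\,\mathbf 1_{\{Y_i=Y_{(1)}\}}\ge 0$ in \eqref{mainSDE}) is exactly what underlies the paper's remark that removing the drift decreases all coordinates, so no substantive difference.
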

\begin{proof} Removing the drift in the \abbr{Atlas} model decreases 
all coordinates of $\underline Y(s)$
and correspondingly increases the \abbr{LHS} of \eqref{eq:19}. Thus, 
\begin{align*}
\mathbb P\Big(\inf_{i > m} \inf_{s \in [0,t]} \{Y_i(s)\}  \le \Gamma
\Big) &\le \sum_{i>m} \mathbb P \Big(\inf_{s \in [0,t]}
\{W_i(s)\} \le \Gamma - X_i(0)\Big) 
\\
&= 2 \sum_{i>m} \mathbb P\Big( W(1) \le \frac{\Gamma-X_i(0)}{\sqrt t} 
\Big)
%\\= 2 \sum_{i>M} \mathbb P\left( \beta(1) \ge \frac{X_i(0)- \Gamma}{\sqrt T} 
%\right)\\
=2 \sum_{i>m} \Big(\frac{X_i(0)- \Gamma}{\sqrt t} \Big)\,,
\end{align*}
as claimed. 
\end{proof}

%\smallskip
\begin{proof}[Proof of Theorem \ref{thm:main}.]
Given initial spacing configuration $\underline{z}$ 
that satisfies \eqref{eq:E2} and \eqref{eq:E1}, consider 
the following two sequences 
of initial distributions for the finite increment vectors  
$\underline{Z}_m (0) := (Z_1(0),\ldots,Z_{m-1}(0))$ of 
the \abbr{Atlas}$_m$ process, $m \ge 2$.
% (with $X_1(0)=0$). 
Starting at the measure $\nu_0^{(m,-)}(\cdot) = \mu^{(m,2)}_\star 
( \cdot | \underline{Z}_m (0) \le \underline{z}_m)$
for the given $\underline{z}_m = (z_1,\ldots,z_{m-1})$ 
yields for same driving Brownian motions 
an \abbr{Atlas}$_m$ spacing process 
$\underline{Z}_m^- (s)$ which is dominated at all times $s \ge 0$ 
by the corresponding process that started 
at spacing $\underline{z}_m$, whereas 
$\nu_0^{(m,+)}(\cdot) = \mu^{(m,2)}_\star (\cdot |
\underline{Z}_m(0) \ge \underline{z}_m)$ similarly 
yields a spacing process $\underline{Z}^+_m(s)$ 
that dominates the spacing for the original 
process which started at $\underline{z}_m$. 
The corresponding relative entropies are then
\begin{align}\label{eq:ent-umeas}
H_m^+ & := H(\nu_0^{(m,+)}|\mu^{(m,2)}_\star) =
-\log \mu_\star^{(m,2)} (\{ \prod_{j=1}^{m-1} [z_j,\infty)\} ) 
= \sum_{j=1}^{m-1} \alpha_j z_j \le 2 \sum_{j=1}^{m-1} z_j \\
H_m^- & := H(\nu_0^{(m,-)}|\mu^{(m,2)}_\star) =-\log \mu_\star^{(m,2)} 
(\{\prod_{j=1}^{m-1} [0,z_j]\}) 
= \sum_{j=1}^{m-1} -\log (1-e^{-\alpha_j z_j}) \nonumber \\
& \le \sum_{j=1}^{m-1} \big[ 1 + (\log \alpha_j z_j)_- \big] 
\le 2 m \log m + \sum_{j=1}^{m-1} (\log z_j)_- \,,
\label{eq:ent-lmeas}
\end{align}
since $-\log (1-e^{-u}) \le 1 + (\log u)_-$ 
%   1-e^{-1} \ge e^{-1} 
% & 1-e^{-u} \ge u/e  when u \in [0,1]   
for all $u \ge 0$, while $\alpha_j \ge 2/m$ (see
\eqref{eq:1}), hence $\log (e/\alpha_j) \le 2 \log m$.
% (for $m \ge 2$).
By \eqref{eq:E2} and \eqref{eq:ent-umeas},
\[
\limsup_{m \to \infty} \frac{H(\nu_0^{(m,+)}|\mu_\star^{(m,2)})}{m^\beta \theta (m)} < \infty \,.
\] 
With $\theta(m) \ge \log m$ in case $\beta=1$, we 
similarly deduce from \eqref{eq:E1} and \eqref{eq:ent-lmeas} that 
\[
\limsup_{m \to \infty} \frac{H(\nu_0^{(m,-)}|\mu^{(m,2)}_\star)}{m^\beta 
\theta (m)} < \infty \,.
\] 
Fixing $k \ge 2$, the uniform 
over $m \ge 2k$ first moment bound,
\[
\nu_0^{(m,-)} \big[ \sum_{j=1}^k Z_j (0) \big] \le
\nu_0^{(m,+)} \big[ \sum_{j=1}^k Z_j (0) \big] 
 = \sum_{j=1}^{k} (z_j + \alpha_j^{-1})  
\le \sum_{j=1}^{k} (z_j + 1) \,,
\]
yields by Markov's inequality 
that $t_m^{-1} \sum_{j=1}^k Z_j(0) \to 0$ in 
$\nu^{(m,\pm)}_0$-probability, 
for any $t_m \to \infty$.
Further, the 
second moment of $\nu_0^{(m,-)}$ is finite (being at most
$\|\underline z\|^2$), as is the second moment of $\nu_0^{(m,+)}$ 
(being at most the product of $e^{H_m^+}$ and 
the finite second moment of $\mu_\star^{(m,2)}$).
For $\psi(m) \uparrow \infty$, with
$\inf_m \{ \psi(m-1)/\psi(m) \} > 0$, let  
\begin{equation}\label{dfn:tm}
t_m := 2 m^\beta \theta(m) \psi(m) \,.
\end{equation}
Setting $m_n=m_n({\bf s}):=\inf\{m \ge 2 : t_m \ge s_n\}$,
our constraints on $\theta(\cdot)$ and $\psi(\cdot)$ yields 
for $t_m$ of \eqref{dfn:tm} and \emph{any $s_n \uparrow \infty$,} 
\[
\inf_{n \ge 1} 
\Big\{ \frac{s_n}{t_{m_n}} \Big\} 
\ge \inf_{m \ge 2} \Big\{ \frac{t_{m-1}}{t_m} \Big\} > 0 \,.
\]
Thus, by the preceding, upon applying Corollary \ref{cor:fdd} to the 
\abbr{Atlas}$_m$ model initialized at $\nu_0^{(m,\pm)}$
we have that the joint law of the first $k$ coordinates of 
$\underline{Z}_{m_n}^{\pm}(s_n)$,
converges as $n \to \infty$ to the corresponding marginal 
of $\mu^{(\infty,2)}_\star$. 
The same limit in distribution then
applies for the spacing 
$(Z_1(s_n),\ldots,Z_k(s_n))$ of \abbr{Atlas}$_{m_n}$
started at $\underline{z}_{m_n}$ (which is 
sandwiched between the corresponding marginals of 
$\underline{Z}_{m_n}^{-}(s_n)$ and 
$\underline{Z}_{m_n}^{+}(s_n)$).
Assuming further that $\sup_m \{\psi(m)/m\} \le 1$, 
we claim that for
$X_1(0)=0$ and the given initial spacing $\underline{Z}(0)=\underline{z}$, 
the \abbr{RHS} of \eqref{eq:18} is summable over $m$, 
at $t=t_m$ of \eqref{dfn:tm} and
\begin{equation}\label{dfn:Gamma}
% \Gamma = 
\Gamma_m := 36 m^{\beta'} \theta(m) \psi(m)^{\beta/(1+\beta)} \,, \quad
% \ell = 
\ell_m := [m^{\beta/(1+\beta)} \psi(m)^{1/(1+\beta}] \,.
\end{equation}
Indeed, since $\theta(\cdot)$ is eventually non-decreasing, we have 
then that  
% $\Gamma_m \uparrow \infty$ and $\ell_m \uparrow \infty$ 
\begin{equation}\label{eq:set-gamma}
\frac{1}{12} \Gamma_m \ell_m \ge t_m \ge \ell_m^{1+\beta} \theta(\ell_m) \,,
\qquad \forall m \ge m_\star 
\end{equation} 
Further, with $k$ fixed and $\Gamma_m \uparrow \infty$, 
necessarily $X_k(0) \le \Gamma_m/8$ for all $m \ge m_\star$ 
large enough, in which case from \eqref{eq:set-gamma}, 
the \abbr{RHS} of \eqref{eq:18} is bounded above 
for such $m$, $t_m$, $\Gamma_m$ and $\ell_m$, by 
\[
2 \mathcal G\Big(\sqrt{\ell_m^{\beta} \theta(\ell_m)}\Big) + 
4 k \mathcal G \Big(3 \sqrt{\ell_m^{\beta-1} 
\theta(\ell_m)}\Big) 
\]    
and recalling that $\theta(\ell_m) \ge \frac{1}{2} \log m$ when $\beta=1$, 
it is easy to verify that the preceding bound is summable in $m$.
Next, utilizing \eqref{eq:E3}, we can further make sure that
$\psi(m) \uparrow \infty$ slowly enough so that for any fixed
$\kappa<\infty$, 
\begin{equation}\label{eq:Xi-lbd}
X_m(0) \ge (\kappa+1) \Gamma_m\,, \qquad \forall m \ge m_\kappa 
\end{equation} 
so that for $m \ge m_\kappa$ the \abbr{RHS} of \eqref{eq:19} 
is bounded above, at $t=t_m$ and $\Gamma=\Gamma_m$, by
\begin{equation}\label{eq:E4}
2 \sum_{j=1}^\infty \mathcal G\Big(\kappa \Gamma_{m+j}/\sqrt{t_m}\Big) \,.
\end{equation}
Note that $\beta' \ge 1/2$ and $\delta := \beta/(1+\beta) - 1/2 \ge 0$
is strictly positive when $\beta>1$. Thus, 
$\beta'-\beta/2 = \delta \beta \ge 0$ and setting  
$\kappa' := (18 \kappa)^2$, we deduce from \eqref{dfn:tm} and 
\eqref{dfn:Gamma} that 
\[
\kappa \frac{\Gamma_m}{\sqrt{t_m}} \frac{\Gamma_{m+j}}{\Gamma_m} \ge 
m^{\beta \delta} \theta(m)^{1/2} \sqrt{2\kappa' (1+j/m)} \,.
\]
Increasing $m_\kappa$ if needed, we have that
$m^{2 \beta \delta} \theta(m) \ge \log m$ for all $m \ge m_\kappa$, in 
which case for $b_m:=m^{-1} \log m$, 
the expression \eqref{eq:E4} is further bounded by
\[
2 \sum_{j=1}^\infty \mathcal G\Big(\sqrt{2 \kappa' (1+j/m) \log m}\Big)  
\le 2 m^{-\kappa'} \sum_{j=1}^\infty e^{-\kappa' j b_m}
% = 2 m^{-\kappa'} \frac{1}{e^{\kappa' b_m}-1} \le
\le \frac{2}{\kappa' b_m} m^{-\kappa'}  
\]
(recall the elementary bound $\mathcal G(x) \le e^{-x^2/2}$ for $x \ge 0$).
Thus, for any $\kappa' > 2$, such choices of $t_m$ and 
$\Gamma_m$ guarantee that the \abbr{RHS} of \eqref{eq:19} is also 
summable over $m$. Combining Lemma \ref{lem:top-control},
Lemma \ref{lem:bulk-control} and the Borel-Cantelli lemma
we deduce that almost surely, the events 
\[
\cA_m : = \Big\{ 
\sup_{s \in [0,t_m]} \{X_k(s)\} < 
\Gamma_m \le 
\inf_{s \in [0,t_m], i > m} \, \{Y_i(s)\} \, 
\Big\} \,,
\] 
occur for all $m$ large enough.
Note that $\cA_m$ implies that throughout 
$[0,t_m]$ the $k$ left-most particles of 
the \abbr{Atlas}$_\infty$ process are from among the 
initially left-most $m$ particles. From this it follows 
% in turn 
that under $\cA_{m_n}$ the spacing $(Z_1(s_n),\ldots,Z_{k-1}(s_n))$ 
for the \abbr{Atlas}$_{m_n}$ coincide with those for the 
\abbr{Atlas}$_\infty$, when using the same 
initial configuration $\underline{z}$ and driving Brownian 
motions $\{W_i(s)\}$. Having proved already the 
convergence in distribution when $n \to \infty$, of 
the first $k-1$ spacing for \abbr{Atlas}$_{m_n}$ at time $s_n$ 
and that the events $\cA_m$ occur for all $m$ large enough, we conclude 
that the \abbr{fdd} of spacing for \abbr{Atlas}$_\infty$ 
converge to those of $\mu^{(\infty,2)}_\star$, along
any (non-random) sequence $s_n \uparrow \infty$, as claimed.
\end{proof}

\end{document}